\documentclass[11pt,a4paper]{article}

\usepackage[utf8]{inputenc}
\usepackage[T1]{fontenc}
\usepackage{amsmath, amssymb, amsthm}
\usepackage{geometry}
\usepackage{tikz-cd}
\usepackage{booktabs}
\usepackage{hyperref}
\usepackage[protrusion=true,expansion=false]{microtype}
\hypersetup{
    colorlinks=true,
    linkcolor=blue,
    citecolor=red,
    urlcolor=blue
}

\newtheorem{theorem}{Theorem}[section]
\newtheorem{lemma}[theorem]{Lemma}
\newtheorem{proposition}[theorem]{Proposition}
\newtheorem{corollary}[theorem]{Corollary}
\theoremstyle{definition}
\newtheorem{definition}[theorem]{Definition}
\newtheorem{example}[theorem]{Example}
\newtheorem{remark}[theorem]{Remark}
\newtheorem{question}[theorem]{Question}

\makeatletter
\newcommand{\address}[1]{\g@addto@macro\@thanks{\\#1}}
\newcommand{\email}[1]{\g@addto@macro\@thanks{\\\texttt{#1}}}
\makeatother

\newcommand{\UPF}{\mathrm{UPF}}
\newcommand{\DUPF}{\mathrm{DUPF}}

\newcommand{\PF}{\mathrm{PF}}
\newcommand{\OSP}{\mathrm{OSP}}
\newcommand{\DOSP}{\mathrm{DOSP}}
\newcommand{\Stir}[2]{\genfrac{\{}{\}}{0pt}{}{#1}{#2}}
\newcommand{\rStir}[3]{\genfrac{\{}{\}}{0pt}{}{#1}{#2}_{#3}}
\newcommand{\StirTwo}[2]{\genfrac{\{}{\}}{0pt}{}{#1}{#2}_{\geq 2}}
\newcommand{\cBell}{\widetilde{B}}
\newcommand{\DCP}{\mathrm{DCP}}
\newcommand{\Cay}{\mathrm{Cay}}
\newcommand{\std}{\operatorname{std}}
\newcommand{\fix}{\operatorname{fix}}

\title{On Deranged Unit-Interval Parking Functions\\ and the Deranged Bell Numbers}
\author{
    Yahia Djemmada\\[1ex]
    \small National Higher School of Mathematics \\[-0.5ex]
    \small Mahelma 16093, Sidi Abdellah, Algeria \\[1ex]
    \small USTHB, Faculty of Mathematics \\[-0.5ex]
    \small RECITS Laboratory, Algiers, Algeria \\[1ex]
    \normalsize \href{mailto:yahia.djemmada@nhsm.edu.dz}{\texttt{yahia.djemmada@nhsm.edu.dz}}
}
\date{\today}
\begin{document}

\maketitle

\begin{abstract}
Unit-interval parking functions are counted by the Fubini numbers and are in explicit bijection with ordered set partitions. We transport the deranged ordered set partitions of Belbachir, Djemmada, and N\'emeth through this bijection and obtain the deranged unit-interval parking functions $\DUPF_n$. The equality $|\DUPF_n|=\widetilde F_n$, the Stirling-transform formula, the exponential generating function $e^{1-e^x}/(2-e^x)$, and the dominant asymptotics are therefore not presented as new enumerative discoveries; they are consequences of the known deranged Bell-number theory. The new material of this note is the parking-side structure: leader and lucky-car characterizations, a fixed-block stratification of all unit-interval parking functions, rencontres-type generating functions and a Poisson limit law for fixed blocks, a bijective fixed-block decomposition of the Fubini numbers, a multivariate block-size refinement, a fully deranged $r$-start extension, and a Cayley-permutation model based on first appearances.
\end{abstract}

\section{Introduction}\label{sec:intro}

A \emph{preference list} $\alpha = (a_1, a_2, \dots, a_n) \in [n]^n$ is a \emph{parking function} if $n$ cars can all park on a one-way street with $n$ spots, where car $i$ drives to its preferred spot $a_i$ and parks in the first free spot at or after $a_i$. Konheim and Weiss~\cite{konheim1966} proved that there are $(n+1)^{n-1}$ parking functions of length $n$; we write $\PF_n$ for this set. Hadaway~\cite{hadaway2022} introduced the \emph{unit-interval parking functions} $\UPF_n \subseteq \PF_n$, those for which every car parks in its preferred spot or exactly one spot after it. Hadaway, and later Bradt et al.~\cite{bradt2024}, proved that these objects are enumerated by the Fubini numbers
\[
|\UPF_n| = F_n = \sum_{k=0}^{n} k!\,\Stir{n}{k},
\]
and Bradt et al. gave the block-structure description recalled below. Chaves Meyles et al.~\cite{chaves2023} made explicit a bijection between $\UPF_n$ and ordered set partitions of $[n]$ and related it to the face structure of the permutohedron.

Independently, Belbachir, Djemmada, and N\'emeth~\cite{belbachir2021} introduced the \emph{deranged Bell numbers} $\tilde F_n$. If an ordered set partition of $[n]$ is written as $B_{\sigma(1)} \mid \dots \mid B_{\sigma(k)}$, where $B_1, \dots, B_k$ are the same blocks listed by increasing minimum element, then it is called \emph{deranged} when $\sigma$ is a derangement of $[k]$. The number of such ordered set partitions is
\begin{equation}\label{eq:bdn-prop}
\tilde F_n = \sum_{k=0}^{n} d_k \Stir{n}{k},
\end{equation}
where $d_k$ is the number of derangements of $[k]$~\cite[Prop.~2.1]{belbachir2021}.

This note connects these two settings. Pulling the deranged ordered set partitions back through the bijection $\phi \colon \UPF_n \to \OSP_n$ gives the family
\[
\DUPF_n := \phi^{-1}(\DOSP_n),
\]
which we call the \emph{deranged unit-interval parking functions}. Since a bijection restricts to any preimage, the identity $|\DUPF_n|=\tilde F_n$ is immediate. It is therefore not the main theorem of the paper. The point of the paper is to ask what the deranged condition means intrinsically for unit-interval parking functions, and what refinements become natural on the parking side.

The results are organized as follows. Section~\ref{sec:prelim} fixes notation and gives a self-contained proof of the inverse of the bijection $\phi$. Section~\ref{sec:dupf} defines $\DUPF_n$, records the immediate enumeration, and gives the leader characterization and small examples. Section~\ref{sec:lucky} translates the condition into a criterion involving lucky cars. Section~\ref{sec:fixed} collects the enumerative refinements: the known EGF, the fixed-block stratification, the fixed-block probability generating function and Poisson limit law, a bijective fixed-block decomposition of the Fubini numbers, and the multivariate block-size refinement. Section~\ref{sec:r} treats the fully deranged $r$-start extension. Section~\ref{sec:cayley} gives the Cayley-permutation model, where the condition is read from the order of first appearances.

\paragraph{Novelty and relation to known results.}
We separate transported consequences from new parking-side structure. The sequence $\tilde F_n$ is OEIS entry A064898~\cite{oeis}, recorded by Penson as the Stirling transform of the derangement numbers; the ordered-set-partition interpretation, formula~\eqref{eq:bdn-prop}, the exponential generating function, and the basic asymptotics are due to Belbachir, Djemmada, and N\'emeth~\cite{belbachir2021}. Whenever a statement is a direct transfer through $\phi$, we say so. The additional content here is the parking-side description via leaders and lucky cars, the full fixed-block stratification of $\UPF_n$, the rencontres and Poisson refinements, the bijective fixed-block decomposition, the block-size refinement, and the fully deranged $r$-start and Cayley interpretations.

\paragraph{Related work.}
Restrictions of unit-interval parking functions and Fubini rankings driven by their ordered-set-partition interpretation are an active topic; Barreto et al.~\cite{barreto2025} study several such restricted families. On the partition side, Nkonkobe et al.~\cite{nkonkobe2020} studied barred preferential arrangements with no fixed blocks, a deranged analogue of the barred-preferential-arrangement model used for rational unit-interval parking functions in~\cite{aguilarfraga2024}; and the fixed-block point of view used here is consistent with the partial deranged Bell numbers of~\cite{djemmada2025}. The present note is a parking-function counterpart to these deranged-partition objects.

\section{Preliminaries and notation}\label{sec:prelim}

\subsection{Parking functions and conventions}

Throughout, $[n] = \{1, 2, \dots, n\}$ and $\alpha^\uparrow$ is the weakly increasing rearrangement of a tuple $\alpha$. For $\alpha \in \PF_n$, car $i$ has \emph{displacement} $s_i - a_i$, where $s_i$ is the spot in which car $i$ parks; the \emph{(total) displacement} of $\alpha$ is $D(\alpha) = \sum_i (s_i - a_i)$. By definition, $\alpha \in \UPF_n$ if and only if every car has displacement $0$ or $1$. Table~\ref{tab:notation} collects the notation; the objects it mentions are defined at first use.

\begin{table}[ht]
\centering
\small
\begin{tabular}{ll}
\toprule
$\PF_n,\ \UPF_n,\ \DUPF_n$ & parking functions; unit-interval; deranged unit-interval\\
$\OSP_n,\ \DOSP_n$ & ordered set partitions of $[n]$; deranged ordered set partitions\\
$\Cay_n,\ \DCP_n$ & Cayley permutations; deranged Cayley permutations\\
$F_n,\ \tilde F_n,\ B_n,\ \cBell_n,\ d_k$ & Fubini, deranged Bell, Bell, complementary Bell, derangement numbers\\
$\Stir{n}{k},\ \StirTwo{n}{k},\ \rStir{n}{k}{r}$ & Stirling numbers of the second kind: classical, $2$-associated, $r$-Stirling\\
$m(\alpha),\ D(\alpha)$ & number of blocks; total displacement of $\alpha$\\
$L(\alpha),\ \fix(\alpha)$ & set of lucky cars; number of fixed blocks of $\alpha$\\
$\nu_j,\ \ell_j,\ I_j,\ \mu_j$ & start, length, car set, and leader of the $j$-th block\\
$\phi,\ \psi,\ \rho,\ \Theta$ & the bijections of Sections~\ref{sec:prelim} and~\ref{sec:cayley}\\
\bottomrule
\end{tabular}
\caption{Notation used in this note.}
\label{tab:notation}
\end{table}

\paragraph{Conventions for $n = 0$.}
The empty tuple is the unique element of $\UPF_0$, and $\phi$ (defined below) sends it to the empty ordered set partition, which is vacuously deranged. Hence $\DUPF_0 = \UPF_0$ and $|\DUPF_0| = \tilde F_0 = 1$, consistent with the tables and generating functions below. A single-block partition is never deranged, so $\tilde F_1 = 0$.

\subsection{Block structure of unit-interval parking functions}

We recall the block-structure description of Bradt et al.~\cite{bradt2024}.

\begin{definition}[{\cite[Def.~2.7]{bradt2024}}]\label{def:blocks}
Let $\alpha \in \UPF_n$ and let $\alpha^\uparrow = (a^\uparrow_1, \dots, a^\uparrow_n)$. The \emph{block structure} of $\alpha$ is the factorization $\alpha^\uparrow = \pi_1 \mid \pi_2 \mid \dots \mid \pi_m$, where a new block $\pi_j$ begins at (and includes) each entry $a^\uparrow_i$ with $a^\uparrow_i = i$. We write $m = m(\alpha)$ for the number of blocks and $\ell_j = |\pi_j|$ for the length of $\pi_j$.
\end{definition}

The blocks act independently: the cars whose preferences lie in the value range of $\pi_j$ park exactly in the contiguous spots occupied by $\pi_j$, without interacting with other blocks~\cite[Obs.~2.8]{bradt2024}. The minimum value of block $\pi_j$ is $\nu_j := 1 + \sum_{l<j} \ell_l$, its value range is $[\nu_j, \nu_{j+1}-1]$ (with $\nu_{m+1} = n+1$), and each block equals the unique \emph{prime} unit-interval parking function of its length,
\begin{equation}\label{eq:prime}
\pi_j = (\nu_j,\ \nu_j,\ \nu_j + 1,\ \dots,\ \nu_j + \ell_j - 2)
\qquad (\text{just } (\nu_j) \text{ when } \ell_j = 1),
\end{equation}
which displaces $\ell_j - 1$ cars by one spot each~\cite{bradt2024}. Hence
\begin{equation}\label{eq:disp-blocks}
D(\alpha) = \sum_{j=1}^{m} (\ell_j - 1) = n - m(\alpha).
\end{equation}

The next theorem describes which rearrangements stay unit-interval; it is the key structural input.

\begin{theorem}[{\cite[Thm.~2.9]{bradt2024}}]\label{thm:bradt}
Let $\alpha \in \UPF_n$ have block structure $\pi_1 \mid \dots \mid \pi_m$. A rearrangement $\sigma$ of $\alpha$ lies in $\UPF_n$ if and only if, for each $j$, the entries of $\pi_j$ appear in $\sigma$ in the same relative order as in $\pi_j$. There are exactly $\binom{n}{\ell_1, \dots, \ell_m}$ such rearrangements.
\end{theorem}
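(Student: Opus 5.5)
The proof compares the parking process of an arbitrary rearrangement $\sigma$ of $\alpha$ with the block structure of $\alpha^\uparrow$. Since $\sigma$ has the same multiset of preferences as $\alpha$, it is a parking function and has the same block structure. First we check that the blocks decouple for every rearrangement, not only for $\alpha^\uparrow$. The cars preferring values in $[\nu_j,\nu_{j+1}-1]$ number exactly $\ell_j$. For every $i$, the number of preferences $\le i$ is at least $i$, with equality exactly at the block boundaries $i=\nu_{j+1}-1$. By induction on $j$, for any arrival order the spots $[1,\nu_{j+1}-1]$ are filled exactly by the cars preferring values $\le \nu_{j+1}-1$. Indeed, no car with a larger preference can enter this range, and no car from this range can overflow past it, since there are exactly enough cars to fill it. So $\sigma\in\UPF_n$ if and only if, for each $j$, the subsequence of $\sigma$ formed by block $j$'s entries, shifted down by $\nu_j-1$, is a unit-interval parking function of length $\ell_j$. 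This reduces the statement to a single block.

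For a single block with multiset $\{1,1,2,\dots,\ell-1\}$ (from \eqref{eq:prime}), we must show that the only arrangement that is unit-interval is the weakly increasing one $(1,1,2,\dots,\ell-1)$. Sufficiency is a direct check: car $1$ parks in spot $1$, car $2$ in spot $2$, and for $k\ge 3$ car $k$ prefers $k-1$ and parks in spot $k$, so each displacement is $0$ or $1$. For necessity we use total displacement. For any parking order, the spots $1,\dots,\ell$ are filled, so $\sum_i (s_i-a_i) = \binom{\ell+1}{2} - \bigl(1+\binom{\ell}{2}\bigr) = \ell-1$. This does not yet force the order, so we argue directly. Suppose a value $v\ge 2$ appears before some smaller value $u$. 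Take the first car whose value is smaller than that of some earlier car, and let $u$ be its preference. When this car arrives, the earlier cars include some with preference greater than $u$. The main obstacle is to show this forces displacement $\ge 2$ for some car. The plan is to track, spot by spot, that the unique value $k-1$ must arrive when spots $1,\dots,k-1$ are full and spot $k$ is empty. If a larger value arrives first, it occupies spot $k$ or beyond, and then the car with value $k-1$ is pushed at least two spots. The case $u=1$, with the second $1$ arriving late, is handled the same way: the second $1$ then finds spots $1$ and $2$ taken, since value $1$ must already sit in spot $1$ and the first later car sits in spot $2$ or beyond. It is therefore displaced by at least $2$. An induction on $k$ makes this precise.

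Combining the two steps, $\sigma\in\UPF_n$ if and only if each block's entries appear in $\sigma$ in the same relative order as in $\pi_j$. Such $\sigma$ correspond bijectively to choices of which positions of $\sigma$ hold block $j$'s entries, i.e. to set compositions of $[n]$ with part sizes $\ell_1,\dots,\ell_m$. Each block's internal order is forced, and entries from different blocks are distinct values, so distinct position choices give distinct sequences. This gives the count $\binom{n}{\ell_1,\dots,\ell_m}$.
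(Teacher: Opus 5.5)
Your reduction to a single block and your final multinomial count are fine. The paper itself gives no proof of this theorem; it imports it from Bradt et al. However, the necessity direction inside a single block, which is the substance of the theorem, has a genuine gap: the mechanism you propose is false.

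You claim two things. First, if a larger value arrives before the car with value $k-1$, then that car is pushed at least two spots. Second, in the case $u=1$ the second $1$ finds spots $1$ and $2$ occupied. Neither holds.
\begin{itemize}
\item Take the block $\{1,1,2,3,4\}$ in the order $(1,1,4,2,3)$. The $4$ parks in spot $4$. The out-of-order $2$ then finds spot $3$ free and has displacement $1$. Only the later $3$ is displaced by $2$.
\item Take $(1,3,1,2)$. The second $1$ parks in spot $2$ with displacement $1$, and it is the $2$ that overflows to spot $4$.
\end{itemize}
So the car that breaks the order need not be the one displaced twice. An induction on $k$ built on your claim does not go through, and as written the key direction is only asserted.

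The missing idea is already in the displacement computation you set aside. If every displacement is $0$ or $1$ and the total over the $\ell$ cars is $\ell-1$, then exactly one car is lucky. The first arrival is always lucky. So is the car that ends in spot $1$, since it must prefer $1$. Hence the first arrival prefers $1$, and every other car has displacement exactly $1$. That is, the second $1$ ends in spot $2$, and the car preferring $v\ge 2$ ends in spot $v+1$.

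A car that ends in spot $s$ with displacement $1$ found spot $s-1$ already occupied on arrival. So the final occupant of spot $s-1$, which you have just identified, arrived before it. This gives the chain: first $1$, then second $1$, then $2,3,\dots,\ell-1$. That is exactly the weakly increasing order. Equivalently, you can argue downward from spot $\ell$, which only the car preferring $\ell-1$ can fill. With this replacement, the rest of your proof stands.
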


\subsection{The ordered set partition of a unit-interval parking function}

We use the explicit bijection of Chaves Meyles et al.~\cite{chaves2023}, phrased in terms of which cars prefer which block. Because the whole note rests on this map, we include a complete proof of its inverse.

\begin{definition}\label{def:phi}
For $\alpha = (a_1, \dots, a_n) \in \UPF_n$ with block minima $\nu_1 < \dots < \nu_m$, assign to each car $i$ the block index
\[
\beta(i) := \text{the unique } j \text{ with } \nu_j \le a_i < \nu_{j+1},
\]
that is, the block containing the preference $a_i$, and set $I_j := \{ i \in [n] : \beta(i) = j \}$ for $j \in [m]$. The \emph{associated ordered set partition} of $\alpha$ is
\[
\phi(\alpha) := (I_1, I_2, \dots, I_m),
\]
whose blocks are listed in increasing block-value order. Since the cars preferring block $j$ are exactly those parking in its spots, $|I_j| = \ell_j \ge 1$, so $\phi(\alpha) \in \OSP_n$. This map agrees with~\cite[Def.~3.3]{chaves2023}.
\end{definition}

\begin{lemma}[Inverse construction]\label{lem:psi}
Let $(C_1, \dots, C_m) \in \OSP_n$ with $\ell_j = |C_j|$ and $\nu_j = 1 + \sum_{l<j} \ell_l$. Define $\psi(C_1, \dots, C_m) := \alpha = (a_1, \dots, a_n)$ as follows: for each $j$, list $C_j$ in increasing order as $c_{j,1} < c_{j,2} < \dots < c_{j,\ell_j}$ and set
\[
a_{c_{j,1}} = \nu_j,
\qquad
a_{c_{j,t}} = \nu_j + t - 2 \quad (2 \le t \le \ell_j),
\]
so that the cars of $C_j$, in increasing index order, receive the prime preferences~\eqref{eq:prime}. Then:
\begin{enumerate}
\item[\textup{(i)}] $\alpha \in \UPF_n$. More precisely, car $c_{j,1}$ parks in spot $\nu_j$ with displacement $0$, and car $c_{j,t}$ parks in spot $\nu_j + t - 1$ with displacement $1$ for $2 \le t \le \ell_j$; so the cars of $C_j$ fill exactly the spots $\nu_j, \dots, \nu_j + \ell_j - 1$, and each block contains exactly one undisplaced car, namely $\min C_j$.
\item[\textup{(ii)}] The block structure of $\alpha$ is $\pi_1 \mid \dots \mid \pi_m$ with $\pi_j$ as in~\eqref{eq:prime}, and the cars preferring block $j$ are exactly those of $C_j$. Hence $\phi(\psi(C_1, \dots, C_m)) = (C_1, \dots, C_m)$.
\item[\textup{(iii)}] $\psi(\phi(\alpha)) = \alpha$ for every $\alpha \in \UPF_n$.
\end{enumerate}
\end{lemma}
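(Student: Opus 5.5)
We prove (i) by simulating the parking process car by car in index order. The key invariant is this: just before car $i$ arrives, for every block $j$ the occupied spots in $[\nu_j, \nu_{j+1}-1]$ form an initial segment $\nu_j, \dots, \nu_j + r - 1$, where $r$ is the number of cars of $C_j$ with index less than $i$.

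We argue by induction on $i$. Suppose $i = c_{j,t}$. If $t = 1$, then $r = 0$, so spot $\nu_j$ is free and car $i$ parks at its preference $\nu_j$ with displacement $0$. If $t \ge 2$, then $r = t-1$ and the occupied spots of block $j$ are exactly $\nu_j, \dots, \nu_j + t - 2$. Car $i$ prefers $\nu_j + t - 2$, which is occupied (it is the last spot of that segment), while $\nu_j + t - 1 \le \nu_j + \ell_j - 1$ is still free. So car $i$ parks there with displacement $1$.

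The point needing care is that no other block interferes. Spots of block $j$ are only ever filled by cars of $C_j$, since every car parks either at its preference or one spot later. A car of $C_j$ never parks outside $[\nu_j, \nu_j + \ell_j - 1]$, because $t \le \ell_j$. Hence the invariant is preserved, and after all cars have parked each block's range is filled exactly by $C_j$. Since every car has displacement $0$ or $1$, we get $\alpha \in \UPF_n$. The undisplaced car of block $j$ is $c_{j,1} = \min C_j$.

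For (ii), sort $\alpha$. The multiset of preferences of $C_j$ is $\{\nu_j, \nu_j, \nu_j+1, \dots, \nu_j+\ell_j-2\}$, which lies in $[\nu_j, \nu_{j+1}-1]$, and these ranges are disjoint and increasing in $j$. So $\alpha^\uparrow$ is the concatenation of the prime words~\eqref{eq:prime}, in order. Next we identify the positions $i$ with $a^\uparrow_i = i$. The entry $\nu_j$ sits at position $\nu_j$ of $\alpha^\uparrow$, so each $\nu_j$ starts a block. Any other entry of the $j$-th segment sits at position $\nu_j + s$ with $s \ge 1$ and has value $\nu_j + s - 1 < \nu_j + s$, so it starts no block. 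Thus the block structure of Definition~\ref{def:blocks} has minima exactly $\nu_1, \dots, \nu_m$. The cars with $\nu_j \le a_i < \nu_{j+1}$ are precisely the elements of $C_j$, so $\phi(\psi(C)) = C$.

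For (iii), let $\alpha \in \UPF_n$ with $\phi(\alpha) = (I_1, \dots, I_m)$. By Definition~\ref{def:phi}, $|I_j| = \ell_j$, so the values $\nu_j$ computed from $\phi(\alpha)$ agree with the block minima of $\alpha$. It remains to show that within block $j$ the cars of $I_j$, taken in increasing index order, carry the preferences $\nu_j, \nu_j, \nu_j+1, \dots$ in that order. This is where Theorem~\ref{thm:bradt} enters, and it is the main step. The preferences of the cars in $I_j$ form the multiset of $\pi_j$, and the theorem says they appear in $\alpha$, read left to right, in the same relative order as in $\pi_j$, namely $\nu_j, \nu_j, \nu_j+1, \dots, \nu_j+\ell_j-2$. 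Reading left to right in $\alpha$ means reading in increasing car index. So the $t$-th smallest car of $I_j$ has exactly the preference that $\psi$ assigns to $c_{j,t}$. Hence $\psi(\phi(\alpha)) = \alpha$.
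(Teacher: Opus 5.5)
Your proof is correct and follows essentially the same route as the paper: the same parking-simulation induction for (i), the same comparison of position with value in $\alpha^\uparrow$ for (ii), and the same appeal to Theorem~\ref{thm:bradt} for (iii). The only difference is cosmetic: your invariant in (i) records which spots are occupied rather than which cars occupy them. That suffices, since car $i$'s move depends only on occupancy.
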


\begin{proof}
(i) All preferences of the cars of $C_j$ lie in $[\nu_j, \nu_j + \ell_j - 2] \subseteq [\nu_j, \nu_{j+1} - 1]$, and these value ranges are pairwise disjoint. We show by induction on the arrival index $i$ that the following invariant holds after cars $1, \dots, i$ have parked: for every $j$, the parked cars of $C_j \cap [i]$ occupy exactly the spots $\nu_j, \nu_j + 1, \dots, \nu_j + |C_j \cap [i]| - 1$. The invariant is empty for $i = 0$. Suppose it holds after car $i - 1$, and let car $i = c_{j,t}$ arrive. By the invariant, every parked car sits in the spot range of its own block, so the occupied spots inside $[\nu_j, \nu_{j+1}-1]$ are exactly $\nu_j, \dots, \nu_j + t - 2$, filled by $c_{j,1}, \dots, c_{j,t-1}$. If $t = 1$, spot $a_i = \nu_j$ is free and car $i$ parks there with displacement $0$. If $t \ge 2$, spot $a_i = \nu_j + t - 2$ is occupied, and the next spot $\nu_j + t - 1 \le \nu_j + \ell_j - 1 < \nu_{j+1}$ is free, so car $i$ parks there with displacement $1$. In both cases the invariant is preserved. Hence every car parks with displacement at most $1$, and $\alpha \in \UPF_n$.

(ii) By construction, $\alpha^\uparrow$ consists, for each $j$ in turn, of the entries $\nu_j, \nu_j, \nu_j + 1, \dots, \nu_j + \ell_j - 2$ (just $\nu_j$ if $\ell_j = 1$). Exactly $\nu_j - 1$ entries are smaller than $\nu_j$, namely those of blocks $1, \dots, j-1$, so $a^\uparrow_{\nu_j} = \nu_j$; and for $\nu_j < i < \nu_{j+1}$ one checks $a^\uparrow_i = i - 1 < i$. By Definition~\ref{def:blocks}, the blocks of $\alpha$ begin exactly at the positions $\nu_1, \dots, \nu_m$, and $\pi_j$ is~\eqref{eq:prime}. The value range of block $j$ is $[\nu_j, \nu_{j+1}-1]$, and the cars whose preference lies in it are exactly those of $C_j$; hence $I_j = C_j$.

(iii) Let $\alpha \in \UPF_n$ with block structure $\pi_1 \mid \dots \mid \pi_m$ and car sets $I_1, \dots, I_m$, where $\pi_j$ is the prime sequence~\eqref{eq:prime}. The tuple $\alpha$ is a unit-interval rearrangement of $\alpha^\uparrow$, so by Theorem~\ref{thm:bradt} the entries of $\pi_j$ occur in $\alpha$ in exactly the relative order~\eqref{eq:prime}. These entries are carried by the cars of $I_j$, read in increasing index order; so the $t$-th smallest car of $I_j$ has preference equal to the $t$-th entry of $\pi_j$. This is exactly the rule defining $\psi$ applied to $(I_1, \dots, I_m) = \phi(\alpha)$; hence $\psi(\phi(\alpha)) = \alpha$.
\end{proof}

Note that the rule for $\psi$ involves no hidden choices: the only repeated preference inside a block is $\nu_j$, given to the two smallest cars of $C_j$, and only their relative order matters.

\begin{theorem}[{\cite[Thm.~3.6]{chaves2023}}]\label{thm:phi-bij}
The map $\phi \colon \UPF_n \to \OSP_n$ is a bijection, with inverse $\psi$.
\end{theorem}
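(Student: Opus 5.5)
The plan is to obtain the statement directly from Lemma~\ref{lem:psi}. Its parts (ii) and (iii) already give both composition identities, so almost nothing new has to be proved. Before composing anything, I would check that both maps are well defined with the stated domain and codomain.

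For $\phi$, Definition~\ref{def:phi} sends $\UPF_n$ into $\OSP_n$. The sets $I_j$ partition $[n]$ because each car receives exactly one block index $\beta(i)$. Each $I_j$ is nonempty because $|I_j| = \ell_j \ge 1$: the cars whose preferences lie in the value range of $\pi_j$ are exactly the cars parking in its spots, by the block independence recalled after Definition~\ref{def:blocks}. For $\psi$, part~(i) of Lemma~\ref{lem:psi} shows that $\psi$ maps $\OSP_n$ into $\UPF_n$. The assignment is unambiguous, since each car lies in exactly one $C_j$ and receives exactly one value.

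The bijectivity then follows by composing in both directions. Part~(ii) gives $\phi\circ\psi = \mathrm{id}_{\OSP_n}$, so $\phi$ is surjective and $\psi$ is injective. Part~(iii) gives $\psi\circ\phi = \mathrm{id}_{\UPF_n}$, so $\phi$ is injective. Together these show that $\phi$ is a bijection with two-sided inverse $\psi$.

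The only step needing care is already inside Lemma~\ref{lem:psi}(iii), which depends on Theorem~\ref{thm:bradt}. That theorem forces the entries of each prime block $\pi_j$ to appear in $\alpha$ in their sorted relative order. Its consequence is that the $t$-th smallest car of $I_j$ carries the $t$-th entry of~\eqref{eq:prime}. Since this is proved in the lemma, I expect no real obstacle.

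As an independent consistency check, I would compare cardinalities. Theorem~\ref{thm:bradt} counts the rearrangements with a given block-length vector as $\binom{n}{\ell_1,\dots,\ell_m}$. Summing over compositions $(\ell_1,\dots,\ell_m)$ of $n$ gives $\sum_k k!\Stir{n}{k}=F_n=|\OSP_n|$, which matches the count of Hadaway and Bradt et al.
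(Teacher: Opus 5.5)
Your proposal is correct and matches the paper's proof, which likewise reads off $\phi\circ\psi=\mathrm{id}_{\OSP_n}$ and $\psi\circ\phi=\mathrm{id}_{\UPF_n}$ from Lemma~\ref{lem:psi}(ii) and (iii). Your well-definedness remarks and the cardinality cross-check are harmless extras and are not needed for the argument.
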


\begin{proof}
Immediate from Lemma~\ref{lem:psi}(ii) and (iii).
\end{proof}

\subsection{Deranged ordered set partitions}

\begin{definition}[{\cite[Def.~1]{belbachir2021}}]\label{def:deranged}
Let $(C_1, \dots, C_k)$ be an ordered set partition of $[n]$, and let $B_1, \dots, B_k$ be the same blocks listed by increasing minimum element, so $\min B_1 < \dots < \min B_k$. We say $(C_1, \dots, C_k)$ is \emph{deranged} if $C_i \ne B_i$ for all $i \in [k]$; equivalently, the permutation $\sigma \in S_k$ with $C_i = B_{\sigma(i)}$ is a derangement. We write $\DOSP_n$ for the set of deranged ordered set partitions of $[n]$; by~\eqref{eq:bdn-prop}, $|\DOSP_n| = \tilde F_n$.
\end{definition}

\section{Deranged unit-interval parking functions}\label{sec:dupf}

\begin{definition}\label{def:dupf}
A unit-interval parking function $\alpha \in \UPF_n$ is a \emph{deranged unit-interval parking function} if $\phi(\alpha)$ is deranged. We write
\[
\DUPF_n := \{ \alpha \in \UPF_n : \phi(\alpha) \in \DOSP_n \} = \phi^{-1}(\DOSP_n).
\]
\end{definition}

Because $\DUPF_n$ is defined as a preimage, its enumeration is immediate. We record it for reference; the content of this note lies in the finer statements that follow.

\begin{proposition}\label{prop:count}
For all $n \ge 0$, the map $\phi$ restricts to a bijection $\DUPF_n \to \DOSP_n$, and
\[
|\DUPF_n| = \tilde F_n = \sum_{k=0}^{n} d_k \Stir{n}{k}.
\]
\end{proposition}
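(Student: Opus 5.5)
The argument splits into two parts: a set-theoretic restriction of the bijection, followed by a count of deranged ordered set partitions.

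For the bijection, Theorem~\ref{thm:phi-bij} says $\phi\colon \UPF_n \to \OSP_n$ is a bijection with inverse $\psi$. By Definition~\ref{def:dupf}, $\DUPF_n = \phi^{-1}(\DOSP_n)$. Hence $\phi$ maps $\DUPF_n$ into $\DOSP_n$. It is injective there because it is injective on $\UPF_n$. It is surjective because for $\gamma \in \DOSP_n$ we have $\phi(\psi(\gamma)) = \gamma$ by Lemma~\ref{lem:psi}(ii), so $\psi(\gamma) \in \DUPF_n$. The restriction is therefore a bijection $\DUPF_n \to \DOSP_n$, with inverse the restriction of $\psi$. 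The case $n=0$ is covered by the stated conventions: the empty partition is vacuously deranged, so $|\DUPF_0| = 1$.

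For the count, we need $|\DOSP_n| = \sum_k d_k \Stir{n}{k}$. This is quoted from \cite[Prop.~2.1]{belbachir2021} in \eqref{eq:bdn-prop}, but it is easy to verify directly, and I would include the verification for self-containment. An ordered set partition of $[n]$ with $k$ blocks is determined by two pieces of data. The first is its underlying unordered partition $\{B_1,\dots,B_k\}$, indexed so that $\min B_1<\dots<\min B_k$; there are $\Stir{n}{k}$ choices. The second is the permutation $\sigma\in S_k$ with $C_i = B_{\sigma(i)}$. The map (partition, $\sigma$) $\mapsto (B_{\sigma(1)},\dots,B_{\sigma(k)})$ is a bijection onto $k$-block ordered set partitions. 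By Definition~\ref{def:deranged}, the image is deranged exactly when $\sigma$ is a derangement. So the number of deranged $k$-block partitions is $d_k\Stir{n}{k}$, and summing over $k$ gives the formula. The boundary conventions are consistent with this: $d_0 = 1$ and $\Stir{0}{0}=1$, while $\Stir{n}{0}=0$ for $n\ge1$.

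No step is a real obstacle. The only point needing care is that indexing the blocks by increasing minima is canonical. This makes $\sigma$ well defined, so that "deranged" is a property of the ordered partition and not of a chosen labelling.
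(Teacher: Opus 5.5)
Your proposal is correct and matches the paper's proof: restrict the bijection $\phi$ of Theorem~\ref{thm:phi-bij} to the preimage $\DUPF_n=\phi^{-1}(\DOSP_n)$, then count $\DOSP_n$ by~\eqref{eq:bdn-prop}. The only difference is that you re-derive~\eqref{eq:bdn-prop} rather than cite it, and that derivation (an underlying partition together with a derangement of its min-ordered blocks) is exactly the $r=0$ case of the counting argument the paper gives later in the proof of Theorem~\ref{thm:strat}.
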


\begin{proof}
A bijection restricts to a bijection between any preimage and its image; take the preimage $\phi^{-1}(\DOSP_n) = \DUPF_n$ in Theorem~\ref{thm:phi-bij} and apply~\eqref{eq:bdn-prop}.
\end{proof}

The deranged condition has a description that does not mention $\phi$: it compares two natural orders on the blocks of $\alpha$.

\begin{proposition}[Intrinsic characterization]\label{prop:intrinsic}
For $\alpha \in \UPF_n$ with associated blocks $I_1, \dots, I_m$ (in block-value order, as in Definition~\ref{def:phi}), call $\mu_j := \min I_j$ the \emph{leader} of block $j$: the least index of a car whose preference lies in block $j$. Then $\alpha \in \DUPF_n$ if and only if, for every $j \in [m]$, the leader $\mu_j$ is \emph{not} the $j$-th smallest among $\mu_1, \dots, \mu_m$.
\end{proposition}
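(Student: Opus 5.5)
The plan is to unwind Definitions~\ref{def:dupf} and~\ref{def:deranged} directly. By Definition~\ref{def:phi}, $\phi(\alpha) = (I_1, \dots, I_m)$ with the blocks listed in block-value order. So the $C_j$ of Definition~\ref{def:deranged} is just $I_j$. Let $B_1, \dots, B_m$ be the same blocks relisted by increasing minimum, and let $\sigma \in S_m$ be the permutation with $I_j = B_{\sigma(j)}$. By definition, $\alpha \in \DUPF_n$ if and only if $\sigma$ has no fixed point, that is, $I_j \ne B_j$ for all $j$.

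The key step is to identify $\sigma(j)$ with the rank of the leader $\mu_j$ among $\mu_1, \dots, \mu_m$. The blocks $I_1, \dots, I_m$ partition $[n]$ and are nonempty (since $|I_j| = \ell_j \ge 1$). Hence their minima $\mu_1, \dots, \mu_m$ are pairwise distinct, and the ranking by minimum is a genuine total order with no ties. By construction $B_r$ is the block whose minimum is the $r$-th smallest element of $\{\mu_1, \dots, \mu_m\}$. Therefore $I_j = B_r$ holds exactly when $\min I_j = \mu_j$ is the $r$-th smallest leader, so $\sigma(j)$ is the rank of $\mu_j$.

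It then follows that $\sigma(j) = j$ exactly when $\mu_j$ is the $j$-th smallest leader. So $\sigma$ is a derangement if and only if no $j$ has $\mu_j$ of rank $j$, which is the claimed criterion. As a remark, I would also note the interpretation given by Lemma~\ref{lem:psi}(i): $\mu_j$ is the unique undisplaced car parking in spot $\nu_j$, since $\mu_j = \min C_j$. This makes the criterion readable from the parking process itself, although it is not needed for the equivalence.

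There is no real obstacle here. The only point needing care is the bookkeeping that the deranged condition of Definition~\ref{def:deranged} is applied with $C_j = I_j$ in the listed (block-value) order. I would also check the conventions. For $m = 0$ the condition is vacuous, which is consistent with $\DUPF_0 = \UPF_0$. For $m = 1$ the single leader always has rank $1$, which matches $\tilde F_1 = 0$ and, more generally, the fact that single-block partitions are never deranged.
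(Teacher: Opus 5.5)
Your proposal is correct and follows essentially the same route as the paper. Both unwind Definition~\ref{def:deranged} with $C_j = I_j$, list the blocks by increasing minimum so that $B_p$ is the block whose leader has rank $p$, and conclude that $I_j = B_j$ exactly when $\mu_j$ is the $j$-th smallest leader; your explicit check that the leaders are pairwise distinct (so the ranking has no ties) is a detail the paper leaves implicit.
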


\begin{proof}
The minimum element of $I_j$ is $\mu_j$. List the blocks by increasing minimum element as $B_1, \dots, B_m$, so $B_p$ is the block whose leader is the $p$-th smallest of $\mu_1, \dots, \mu_m$. By Definition~\ref{def:deranged}, $\alpha \in \DUPF_n$ iff $I_j \ne B_j$ for all $j$; and $I_j = B_j$ exactly when $\mu_j$ is the $j$-th smallest leader.
\end{proof}

In words, $\alpha$ is deranged exactly when sorting its blocks by value gives a derangement of the order obtained by sorting the blocks by their least-indexed car. This is nothing more than Definition~\ref{def:deranged} rewritten in parking language; we use it constantly below. For a single block the two orders coincide, so single-block parking functions are never deranged.

For later use, we also record the corresponding fixed-block terminology.

\begin{definition}\label{def:fixedblock}
Let $(C_1, \dots, C_k) \in \OSP_n$ with blocks $B_1, \dots, B_k$ in increasing min-element order. The block $C_i$ is a \emph{fixed block} if $C_i = B_i$. For $\alpha \in \UPF_n$, a value-block $I_i$ of $\alpha$ is \emph{fixed} if it is a fixed block of $\phi(\alpha)$; by Proposition~\ref{prop:intrinsic}, this happens exactly when the leader $\mu_i$ is the $i$-th smallest leader. We write $\fix(\alpha)$ for the number of fixed blocks of $\alpha$; thus $\alpha \in \DUPF_n$ if and only if $\fix(\alpha) = 0$.
\end{definition}

\subsection{Examples, values, and the permutohedron}\label{sec:examples}

\begin{example}[$n = 2$]
We have $\UPF_2 = \{(1,1), (1,2), (2,1)\}$. For $(1,2)$, $\phi(1,2) = (\{1\}, \{2\})$ is the identity ordering; both blocks are fixed. For $(1,1)$, there is a single block and $\phi(1,1) = (\{1,2\})$ is fixed. For $(2,1)$, car $1$ prefers block $2$ and car $2$ prefers block $1$, so $\phi(2,1) = (\{2\}, \{1\})$: both blocks move. Thus $\DUPF_2 = \{(2,1)\}$ and $|\DUPF_2| = 1 = \tilde F_2$.
\end{example}

\begin{example}[$n = 3$]\label{ex:n3}
Among the $13$ elements of $\UPF_3$, the deranged ones are
\[
\DUPF_3 = \{\, (2,1,2),\ (2,2,1),\ (3,1,1),\ (2,3,1),\ (3,1,2) \,\},
\]
so $|\DUPF_3| = 5 = \tilde F_3$. They split by displacement as Corollary~\ref{cor:refine} predicts: the three functions of displacement $1$ (two blocks), namely $(2,1,2), (2,2,1), (3,1,1)$, give $d_2 \Stir{3}{2} = 1 \cdot 3 = 3$; the two derangement permutations $(2,3,1), (3,1,2)$ of displacement $0$ (three blocks) give $d_3 \Stir{3}{3} = 2 \cdot 1 = 2$. For instance $\phi(3,1,1) = (\{2,3\}, \{1\})$, whose min-element order is $(\{1\}, \{2,3\})$: both blocks move. For a permutation every block is a singleton and $\phi(\alpha) = \alpha$, so the permutations in $\DUPF_n$ are exactly the derangements.
\end{example}

\begin{table}[ht]
\centering
\begin{tabular}{lrrrrrrrrr}
\toprule
$n$ & $0$ & $1$ & $2$ & $3$ & $4$ & $5$ & $6$ & $7$ & $8$ \\
\midrule
$|\UPF_n| = F_n$   & $1$ & $1$ & $3$  & $13$ & $75$  & $541$ & $4683$ & $47293$ & $545835$ \\
$|\DUPF_n| = \tilde F_n$ & $1$ & $0$ & $1$  & $5$  & $28$  & $199$ & $1721$ & $17394$ & $200803$ \\
\bottomrule
\end{tabular}
\caption{Unit-interval and deranged unit-interval parking functions. The values of $|\DUPF_n|$ for $n \le 7$ were confirmed by exhaustive search and agree with the deranged Bell numbers $\tilde F_n$ of~\cite{belbachir2021}, OEIS entry A064898~\cite{oeis}.}
\label{tab:values}
\end{table}

\begin{remark}[Permutohedron]\label{rmk:perm}
Chaves Meyles et al.~\cite{chaves2023} identify the unit-interval parking functions of length $n$ with $m$ blocks with the $(n-m)$-dimensional faces of the permutohedron of order $n$, faces being labelled by ordered set partitions. Under this labelling, $\DUPF_{n,m}$ picks out a \emph{subset} of $d_m \Stir{n}{m}$ faces in dimension $n - m$. This subset is not closed under passing to faces, so the deranged faces do not form a subcomplex: already for $n = 3$, the edge labelled $(\{2,3\}, \{1\})$ is deranged, while one of its two vertices, $(\{3\}, \{2\}, \{1\})$, is not (its middle block is fixed). The deranged faces do carry the finer stratification of Theorem~\ref{thm:strat} by the number of fixed blocks; we do not pursue the geometry here.
\end{remark}

\subsection{Characterization through lucky cars}\label{sec:lucky}

Proposition~\ref{prop:intrinsic} tests the deranged condition through the block structure. We now reduce it to a single derangement test on a word, and then to a comparison of two orderings of the lucky cars.

\begin{definition}\label{def:leaderword}
For $\alpha \in \UPF_n$ with value-blocks $I_1, \dots, I_m$ and leaders $\mu_j = \min I_j$, the \emph{leader word} of $\alpha$ is the sequence $W(\alpha) = (\mu_1, \dots, \mu_m)$ of distinct car indices. Its \emph{standardization} $\std(W(\alpha)) \in S_m$ is the unique permutation order-isomorphic to $W(\alpha)$: $\std(W(\alpha))(j)$ is the rank of $\mu_j$ among $\mu_1, \dots, \mu_m$ sorted increasingly.
\end{definition}

\begin{proposition}\label{prop:std}
$\alpha \in \DUPF_n$ if and only if $\std(W(\alpha))$ is a derangement in $S_m$.
\end{proposition}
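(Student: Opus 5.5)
The plan is to reduce the statement directly to Proposition~\ref{prop:intrinsic}, since the standardization of the leader word encodes exactly the data that proposition inspects. Write $\tau := \std(W(\alpha)) \in S_m$. By Definition~\ref{def:leaderword}, $\tau(j)$ is the rank of $\mu_j$ among $\mu_1, \dots, \mu_m$ sorted increasingly. The leaders are distinct because the $I_j$ are disjoint and nonempty, so $\tau$ is a genuine permutation.

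First I will note that $\tau$ has a fixed point at $j$, i.e.\ $\tau(j) = j$, precisely when $\mu_j$ is the $j$-th smallest leader. This is just the definition of rank. Hence $\tau$ is a derangement if and only if no $j$ has $\mu_j$ equal to the $j$-th smallest leader. By Proposition~\ref{prop:intrinsic}, that is exactly the condition $\alpha \in \DUPF_n$.

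For completeness I will also tie $\tau$ to the permutation $\sigma$ of Definition~\ref{def:deranged}. Let $B_p$ denote the block of $\phi(\alpha)$ whose minimum is the $p$-th smallest leader. Then $I_j = B_{\tau(j)}$, so $\tau$ coincides with $\sigma$, and the claim becomes a literal restatement of the definition. Equivalently, the number of fixed points of $\tau$ equals $\fix(\alpha)$.

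I do not expect a real obstacle. The only points needing care are that the leaders are distinct, and the degenerate cases. For $m = 0$ (that is, $n = 0$) the empty permutation is a derangement, consistent with the convention that $\DUPF_0 = \UPF_0$. For $m = 1$ the identity in $S_1$ is not a derangement, matching the remark that single-block parking functions are never deranged.
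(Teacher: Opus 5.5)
Your proposal is correct and matches the paper's proof: the paper also notes that $\std(W(\alpha))$ fixes $j$ exactly when $\mu_j$ is the $j$-th smallest leader, and then applies Proposition~\ref{prop:intrinsic}. Your identification of $\std(W(\alpha))$ with the block permutation $\sigma$ of Definition~\ref{def:deranged} and your checks of the cases $m=0$ and $m=1$ are accurate but not needed for the argument.
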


\begin{proof}
$\std(W(\alpha))$ fixes $j$ exactly when $\mu_j$ is the $j$-th smallest leader, which by Proposition~\ref{prop:intrinsic} is exactly when block $I_j$ is fixed.
\end{proof}

\begin{definition}\label{def:lucky}
For $\alpha \in \UPF_n$, a car $i \in [n]$ is \emph{lucky} if it parks in its preferred spot $a_i$ (displacement $0$). We write $L(\alpha) := \{ i \in [n] : \text{car } i \text{ is lucky} \}$.
\end{definition}

By Lemma~\ref{lem:psi}(i) and (iii), each value-block contains exactly one lucky car, namely its leader, and the leader's preference is the block start; conversely every leader is lucky (cf.~\cite{bradt2024}). Hence $|L(\alpha)| = m(\alpha)$ and $L(\alpha) = \{\mu_1, \dots, \mu_m\}$.

\begin{theorem}[Characterization via lucky cars]\label{thm:local}
Let $\alpha \in \UPF_n$ and $m = |L(\alpha)|$. List the lucky cars in two orders:
\begin{itemize}
    \item by increasing car index: $u_1 < u_2 < \dots < u_m$;
    \item by increasing preference value: $w_1, w_2, \dots, w_m$, so that $a_{w_1} < a_{w_2} < \dots < a_{w_m}$.
\end{itemize}
(The preferences of the lucky cars are the distinct block starts $\nu_1 < \dots < \nu_m$, so the second order is well defined.) Then $\alpha \in \DUPF_n$ if and only if $u_k \neq w_k$ for all $k \in [m]$.
\end{theorem}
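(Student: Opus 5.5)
The plan is to reduce the statement to Proposition~\ref{prop:intrinsic} by identifying the two orderings of the lucky cars with the two orderings of the leaders. The key input is the remark just before the theorem: by Lemma~\ref{lem:psi}(i) and (iii), $L(\alpha)=\{\mu_1,\dots,\mu_m\}$, and each leader $\mu_j$ has preference $a_{\mu_j}=\nu_j$. I will first verify this for an arbitrary $\alpha\in\UPF_n$. Write $\alpha=\psi(\phi(\alpha))$. Lemma~\ref{lem:psi}(i) then says that inside each value-block $I_j$ the undisplaced car is exactly $\min I_j=\mu_j$, and that it receives preference $\nu_j$; every other car of $I_j$ is displaced by $1$. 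So the lucky cars are precisely the leaders, one per block, and their preferences are the distinct block starts $\nu_1<\dots<\nu_m$. In particular the ordering by preference is well defined, as asserted parenthetically in the statement.

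Next I identify the two lists. Ordering the lucky cars by increasing preference means ordering them by increasing $\nu_j$, which is block-value order. Hence $w_k=\mu_k$ for all $k$, i.e.\ $(w_1,\dots,w_m)=W(\alpha)$ is the leader word of Definition~\ref{def:leaderword}. Ordering the lucky cars by increasing index gives the leaders sorted increasingly, so $u_k$ is the $k$-th smallest of $\mu_1,\dots,\mu_m$.

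Finally, $u_k=w_k$ holds exactly when $\mu_k$ is the $k$-th smallest leader. By Proposition~\ref{prop:intrinsic}, or equivalently by Definition~\ref{def:fixedblock}, this is exactly the condition that block $I_k$ is fixed. Therefore $u_k\ne w_k$ for all $k$ if and only if $\fix(\alpha)=0$, i.e.\ $\alpha\in\DUPF_n$. An alternative phrasing is $\std(W(\alpha))(k)=k \iff u_k=w_k$, which then invokes Proposition~\ref{prop:std}.

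The only step needing care is the first: showing that lucky cars coincide with leaders for every $\alpha\in\UPF_n$, not just for those of the form $\psi(\cdot)$. This is handled by (iii) of Lemma~\ref{lem:psi}. The degenerate case $m=0$ ($n=0$) is vacuous, and for $m=1$ both lists coincide, giving non-derangement as expected.
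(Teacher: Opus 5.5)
Your proof is correct and follows the paper's argument. Both identify the lucky cars with the leaders via Lemma~\ref{lem:psi}(i) and (iii), observe that $w_k=\mu_k$ while $u_k$ is the $k$-th smallest leader, and conclude from Proposition~\ref{prop:intrinsic} that $u_k=w_k$ exactly when block $I_k$ is fixed.
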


\begin{proof}
The lucky car whose preference is $\nu_k$ is the leader of block $k$, so $w_k = \mu_k$; and $u_k$ is the $k$-th smallest leader. By Proposition~\ref{prop:intrinsic}, block $k$ is fixed if and only if its leader $\mu_k = w_k$ is the $k$-th smallest leader, that is, $w_k = u_k$. So $\alpha$ is deranged if and only if $u_k \neq w_k$ for all $k$.
\end{proof}

The test needs only the outcome of the parking process: run the cars, record the lucky ones, and compare the two orders. No standardization step is required, though the test is of course not free of the parking simulation. It also shows that the deranged condition is not a coordinate condition of the form ``$a_i \neq i$'': the test fails in both directions, since $(2,2,1) \in \DUPF_3$ although $a_2 = 2$, while $(4,3,1,1) \in \UPF_4 \setminus \DUPF_4$ although $a_i \neq i$ for every $i$ (its block $\{2\}$ is fixed in second position).

\begin{corollary}[The first car]\label{cor:a1}
For every $\alpha \in \UPF_n$ with $n \ge 1$, car $1$ parks in its preferred spot; so car $1$ is \emph{always} lucky, and it is the leader of the block containing $a_1$. If $\alpha \in \DUPF_n$, then this block is not the first block; equivalently $a_1 > \ell_1$, and in particular $a_1 \neq 1$. Conversely, if $a_1 = 1$ then the first block is fixed and $\alpha \notin \DUPF_n$. The condition $a_1 \neq 1$ alone is not sufficient: $(2,1,3) \in \UPF_3 \setminus \DUPF_3$.
\end{corollary}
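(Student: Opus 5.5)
The plan is to derive everything from the lucky-car description after Definition~\ref{def:lucky} together with Theorem~\ref{thm:local}. There are four claims to check: car $1$ is always lucky; it leads the block containing $a_1$; in the deranged case that block is not block $1$, equivalently $a_1>\ell_1$; and $a_1=1$ forces block $1$ to be fixed. The counterexample $(2,1,3)$ is then verified directly.

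First, car $1$ arrives on an empty street, so it parks at $a_1$ with displacement $0$, and $1\in L(\alpha)$. Since $L(\alpha)=\{\mu_1,\dots,\mu_m\}$, car $1$ is some leader $\mu_j$. Here $j=\beta(1)$ is the block containing $a_1$, because car $1\in I_{\beta(1)}$ and $1$ is the smallest index, so $\mu_{\beta(1)}=\min I_{\beta(1)}=1$. It is also the smallest leader, so in the notation of Theorem~\ref{thm:local} we have $u_1=1=\mu_{\beta(1)}$.

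Next, suppose $\beta(1)=1$. Then $\mu_1=1$ is the $1$st smallest leader, so block $I_1$ is fixed by Proposition~\ref{prop:intrinsic}, and $\alpha\notin\DUPF_n$. Hence every $\alpha\in\DUPF_n$ satisfies $\beta(1)\ge 2$. By Definition~\ref{def:phi} this means $a_1\ge\nu_2=1+\ell_1$, that is, $a_1>\ell_1\ge 1$, and in particular $a_1\ne 1$. Conversely, $a_1=1$ lies in $[\nu_1,\nu_2-1]$, so $\beta(1)=1$ and the first block is fixed, as just shown.

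Finally, for $(2,1,3)$: car $1$ parks at spot $2$, car $2$ at spot $1$, and car $3$ at spot $3$, all with displacement $0$. So $(2,1,3)$ is a permutation in $\UPF_3$, and by Example~\ref{ex:n3} it lies in $\DUPF_3$ only if it is a derangement. It fixes $3$, so the block $\{3\}$ is fixed and $(2,1,3)\notin\DUPF_3$, even though $a_1=2\ne 1$.

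No step is a real obstacle. The only point needing care is the identification of car $1$ as the leader of block $\beta(1)$, and that follows immediately from $1=\min[n]$.
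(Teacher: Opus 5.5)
Your proof is correct and follows the paper's argument. Car $1$ parks on the empty street, is lucky, and is the leader $\mu_{\beta(1)}$; the case $\beta(1)=1$ makes block $1$ fixed (you cite Proposition~\ref{prop:intrinsic} directly where the paper phrases it as $u_{j_0}=w_{j_0}$ via Theorem~\ref{thm:local}); otherwise $a_1\ge\nu_2=\ell_1+1$. Your check of $(2,1,3)$ through the permutation/derangement remark of Example~\ref{ex:n3} is equivalent to the paper's direct computation $\phi(2,1,3)=(\{2\},\{1\},\{3\})$.
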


\begin{proof}
Car $1$ arrives first, when the street is empty, so it parks in spot $a_1$ and is lucky. Having the least possible index, it is the smallest lucky car, so $u_1 = 1$; and it is the leader of the block $j_0 := \beta(1)$ containing $a_1$, so $w_{j_0} = 1$. Block $j_0$ is fixed iff $w_{j_0} = u_{j_0}$, i.e.\ iff $u_{j_0} = 1$, i.e.\ iff $j_0 = 1$, because the $u_k$ increase strictly from $u_1 = 1$. So if $\alpha$ is deranged then $j_0 \ge 2$, whence $a_1 \ge \nu_2 = \ell_1 + 1 \ge 2$; and if $a_1 = 1 = \nu_1$ then $j_0 = 1$ and the first block is fixed. Finally, $\phi(2,1,3) = (\{2\}, \{1\}, \{3\})$ has the fixed block $\{3\}$ in third position, so $(2,1,3) \notin \DUPF_3$ although $a_1 = 2$.
\end{proof}

The example $(2,2,1) \in \DUPF_3$ shows that nothing stronger holds for car $1$: there, car $1$ is lucky with $a_1 = 2$, leading the second block, and $\alpha$ is deranged. In fact, being the first to arrive, car $1$ is lucky in \emph{every} parking function; the deranged condition can only control \emph{which} block car $1$ leads, never whether it is lucky.

\section{Fixed-block and generating-function refinements}\label{sec:fixed}

This section gathers the enumerative consequences and refinements. We use the exponential generating functions
\[
F(x) = \sum_{n\ge0} F_n \frac{x^n}{n!} = \frac{1}{2 - e^x},
\qquad
B(x) = \sum_{n\ge0} B_n \frac{x^n}{n!} = e^{e^x - 1},
\qquad
D(z) = \sum_{k\ge0} d_k \frac{z^k}{k!} = \frac{e^{-z}}{1-z}.
\]
Because $|\DUPF_n| = \tilde F_n$ by Proposition~\ref{prop:count}, every known statement about the deranged Bell numbers also applies to $\DUPF_n$; the point here is to record the parking-side refinements that come from fixed blocks and block sizes.

\subsection{The exponential generating function}

The generating function of $\tilde F_n$ is due to Belbachir, Djemmada, and N\'emeth~\cite[Thm.~3.1]{belbachir2021}; it is not new here. We include a proof by the symbolic method because the composition it uses---a derangement of nonempty blocks---is exactly the structure that $\DUPF_n$ carries through $\phi$, and because the refinements below (Proposition~\ref{prop:fixegf} and Theorem~\ref{thm:multi}) are obtained by marking this same composition.

\begin{theorem}[{\cite[Thm.~3.1]{belbachir2021}}]\label{thm:egf}
The exponential generating function of the deranged unit-interval parking functions is
\[
\tilde F(x) := \sum_{n\ge0} |\DUPF_n|\,\frac{x^n}{n!}
            = \sum_{n\ge0} \tilde F_n\,\frac{x^n}{n!}
            = \frac{e^{1 - e^x}}{2 - e^x}.
\]
\end{theorem}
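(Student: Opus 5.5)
The plan is to derive the generating function from the counting formula of Proposition~\ref{prop:count} by composing exponential generating functions, and to justify the composition structurally through $\phi$.

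By Proposition~\ref{prop:count}, $|\DUPF_n| = \tilde F_n = \sum_{k} d_k \Stir{n}{k}$. I would read this as a composition of species. A deranged ordered set partition of $[n]$ amounts to two pieces of data. The first is an unordered set partition $\{B_1,\dots,B_k\}$ of $[n]$ into nonempty blocks, canonically indexed by increasing minimum. The second is a derangement $\sigma\in S_k$ acting on these indices. The indexing by minima is canonical, so it turns the set of blocks into the label set $[k]$ with no choices. Hence the structure is a derangement (species with EGF $D(z)$) composed with nonempty sets (EGF $e^x-1$).

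The composition formula for labelled structures then gives
\[
\tilde F(x)=D(e^x-1).
\]
Equivalently, I would verify this directly at the level of coefficients. Using $\sum_n \Stir{n}{k} x^n/n! = (e^x-1)^k/k!$ and summing $d_k$ against these gives
\[
\sum_n \tilde F_n \frac{x^n}{n!} = \sum_k d_k \frac{(e^x-1)^k}{k!}.
\]
Substituting $D(z)=e^{-z}/(1-z)$ with $z=e^x-1$ yields
\[
\frac{e^{1-e^x}}{1-(e^x-1)}=\frac{e^{1-e^x}}{2-e^x},
\]
as claimed.

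The only point needing care is the formal legitimacy of the substitution, since it is done in formal power series. Because $e^x-1$ has zero constant term, the composition is well defined. The interchange of the sums over $n$ and $k$ is also legitimate, because $(e^x-1)^k$ has order $k$. The combinatorial step I consider the crux is checking that the canonical min-ordering makes the derangement act on the blocks independently of their contents. This is exactly what makes the count split as $d_k\Stir{n}{k}$ and the composition valid. The same marking idea should later support Proposition~\ref{prop:fixegf}, obtained by replacing $D$ with the rencontres EGF $e^{(u-1)z}/(1-z)$.
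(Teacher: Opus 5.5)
Your proposal is correct and matches the paper's proof. The paper also realizes deranged ordered set partitions as the labelled composition $\mathrm{Der}\circ\mathcal{E}_{\ge1}$, labelling blocks by increasing minimum element, and obtains $\tilde F(x)=D(e^x-1)$ from the composition theorem. It then checks this against $\sum_k d_k\Stir{n}{k}$ using $(e^x-1)^k/k!=\sum_n \Stir{n}{k}\,x^n/n!$, just as you do.
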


\begin{proof}[Proof by the symbolic method]
A deranged ordered set partition of $[n]$ is a structure of the labelled composition $\mathrm{Der} \circ \mathcal{E}_{\ge1}$ of two species: each part is a nonempty set $\mathcal{E}_{\ge1}$, with generating function $E(x) = e^x - 1$, and the collection of parts carries a derangement structure $\mathrm{Der}$, with generating function $D(z) = e^{-z}/(1-z)$. Indeed, fix a set partition of $[n]$ into $k$ blocks and list them by increasing minimum element as $B_1, \dots, B_k$. A $\mathrm{Der}$-structure on the set of blocks is a fixed-point-free permutation of $\{B_1, \dots, B_k\}$; under the labelling $B_i \leftrightarrow i$ it is a derangement $\sigma \in S_k$ and encodes the deranged ordering $B_{\sigma(1)} \mid \dots \mid B_{\sigma(k)}$ of Definition~\ref{def:deranged}, the labelling guaranteeing that ``$\sigma$ has no fixed point'' means ``no block sits in its own position''. By the composition theorem for labelled structures~\cite[Thm.~II.2]{flajolet2009},
\[
\tilde F(x) = D\bigl(E(x)\bigr)
            = \frac{e^{-(e^x - 1)}}{1 - (e^x - 1)}
            = \frac{e^{1-e^x}}{2 - e^x}.
\]
Extracting coefficients recovers~\eqref{eq:bdn-prop}: since $(e^x-1)^k/k! = \sum_{n\ge0} \Stir{n}{k}\, x^n/n!$,
\[
D\bigl(E(x)\bigr)
= \sum_{k\ge0} \frac{d_k}{k!}(e^x-1)^k
= \sum_{n\ge0}\Bigl(\sum_{k=0}^{n} d_k \Stir{n}{k}\Bigr)\frac{x^n}{n!}. \qedhere
\]
\end{proof}

\begin{remark}\label{rmk:asym}
The function $\tilde F(x)$ is meromorphic; its dominant singularity is the simple pole $x_0 = \log 2$ (where $e^x = 2$; all other solutions have larger modulus), and the entire numerator $e^{1-e^x}$ equals $e^{-1}$ there. Singularity analysis~\cite[Ch.~IV]{flajolet2009} yields
\[
\frac{\tilde F_n}{n!} \sim \frac{1}{2e\,(\log 2)^{n+1}},
\]
which is~\cite[Thm.~5.1]{belbachir2021} and is also recorded in the OEIS entry for this sequence~\cite{oeis}. In particular $|\DUPF_n|$ grows at the same exponential rate $(\log 2)^{-n}$ as $|\UPF_n| = F_n \sim n!/\bigl(2(\log 2)^{n+1}\bigr)$, and $|\DUPF_n|/|\UPF_n| \to e^{-1}$. Corollary~\ref{cor:poisson} below refines this last limit.
\end{remark}

\subsection{Fixed-block stratification}\label{subsec:fixed-strat}

Rather than only removing the ordered set partitions with a fixed block, we can count unit-interval parking functions by how many fixed blocks they have. The deranged family is the bottom stratum.

\begin{theorem}[Fixed-block stratification]\label{thm:strat}
For $0 \le r \le m \le n$,
\[
\bigl| \{ \alpha \in \UPF_n : m(\alpha) = m,\ \fix(\alpha) = r \} \bigr|
= \Stir{n}{m} \binom{m}{r} d_{m-r}.
\]
\end{theorem}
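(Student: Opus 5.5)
We transport the count through the bijection $\phi$ of Theorem~\ref{thm:phi-bij}. By Definition~\ref{def:phi} we have $m(\alpha)$ equal to the number of blocks of $\phi(\alpha)$. By Definition~\ref{def:fixedblock}, $\fix(\alpha)$ is the number of fixed blocks of $\phi(\alpha)$. So it suffices to count ordered set partitions $(C_1,\dots,C_m)$ of $[n]$ with exactly $m$ blocks, exactly $r$ of which are fixed.

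The plan is to factor each such ordered set partition into two independent pieces of data.

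\begin{enumerate}
\item The first piece is the underlying set partition $\{B_1,\dots,B_m\}$ of $[n]$ into $m$ blocks, with the blocks labelled by increasing minimum element. There are $\Stir{n}{m}$ choices.
\item The second piece is the permutation $\sigma\in S_m$ determined by $C_i=B_{\sigma(i)}$.
\end{enumerate}

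The map $(C_1,\dots,C_m)\mapsto(\{B_p\},\sigma)$ is a bijection onto the set of pairs (set partition, permutation in $S_m$). Its inverse is $(\{B_p\},\sigma)\mapsto(B_{\sigma(1)},\dots,B_{\sigma(m)})$. This works because the labelling $B_1,\dots,B_m$ is canonically determined by the set partition alone.

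Under this bijection, $C_i$ is a fixed block exactly when $C_i=B_i$, that is, when $\sigma(i)=i$. Hence the number of fixed blocks equals the number of fixed points of $\sigma$.

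The count then reduces to the classical rencontres number. The number of permutations in $S_m$ with exactly $r$ fixed points is $\binom{m}{r}d_{m-r}$: we choose the fixed set, and the remaining $m-r$ points must form a derangement of themselves. After relabelling, the derangements of an $(m-r)$-element set are counted by $d_{m-r}$. Multiplying by $\Stir{n}{m}$ for the independent choice of set partition gives the formula.

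The only point needing care is the independence claim: the fixed-point statistic must depend only on $\sigma$ and not on the particular set partition. This holds precisely because of the canonical min-element labelling.

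The edge cases are consistent with the formula. For $n=0$, only $m=r=0$ gives a nonzero count, namely $1$. For $m=0<n$, we get $\Stir{n}{0}=0$. Summing over $r$ recovers $m!\Stir{n}{m}$, and $r=0$ recovers Proposition~\ref{prop:count}. I expect no genuine obstacle beyond stating this bijection cleanly.
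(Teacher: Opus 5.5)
Your proof is correct and follows essentially the same route as the paper: transport through $\phi$, which preserves $m(\alpha)$ and $\fix(\alpha)$, then choose the underlying partition in $\Stir{n}{m}$ ways. Encode the ordering by $\sigma\in S_m$ relative to the min-element labelling, so that fixed blocks are exactly the fixed points of $\sigma$, and count these by the rencontres number $\binom{m}{r}d_{m-r}$. One small slip in your sanity check: $r=0$ alone gives Corollary~\ref{cor:refine}, and it recovers Proposition~\ref{prop:count} only after summing over $m$.
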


\begin{proof}
The map $\phi$ carries the blocks of $\alpha$ to the blocks of $\phi(\alpha)$, so it preserves both $m(\alpha)$ and $\fix(\alpha)$; it therefore suffices to count the ordered set partitions of $[n]$ with $m$ blocks, exactly $r$ of them fixed. Choose the underlying unordered partition $P$ ($\Stir{n}{m}$ ways) and write its blocks in increasing min-element order as $B_1, \dots, B_m$. Every ordering of $P$ is $(B_{\sigma(1)}, \dots, B_{\sigma(m)})$ for a unique $\sigma \in S_m$, and the block in position $i$ is fixed exactly when $\sigma(i) = i$. So the orderings with exactly $r$ fixed blocks correspond to the permutations of $[m]$ with exactly $r$ fixed points, of which there are $\binom{m}{r} d_{m-r}$: choose the fixed points, then derange the rest.
\end{proof}

\begin{corollary}[Refinement by displacement]\label{cor:refine}
For $1 \le m \le n$, the number of $\alpha \in \DUPF_n$ with exactly $m$ blocks is
\[
|\DUPF_{n,m}| = d_m \Stir{n}{m},
\]
and by~\eqref{eq:disp-blocks} this is also the number of $\alpha \in \DUPF_n$ with displacement $D(\alpha) = n - m$. Summing over $m$ recovers Proposition~\ref{prop:count}; summing Theorem~\ref{thm:strat} over $r$ instead recovers $m!\,\Stir{n}{m}$ and $|\UPF_n| = F_n$.
\end{corollary}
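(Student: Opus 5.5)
The count follows by specializing Theorem~\ref{thm:strat} to $r=0$, and the other two claims are direct summations. The plan has three steps.

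\textbf{Step 1 (the count).} By Definition~\ref{def:fixedblock}, $\alpha\in\DUPF_n$ if and only if $\fix(\alpha)=0$. So $\DUPF_{n,m}$ is exactly the set of $\alpha\in\UPF_n$ with $m(\alpha)=m$ and $\fix(\alpha)=0$. Setting $r=0$ in Theorem~\ref{thm:strat} gives $\Stir{n}{m}\binom{m}{0}d_m=d_m\Stir{n}{m}$.

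\textbf{Step 2 (displacement).} Equation~\eqref{eq:disp-blocks} gives $D(\alpha)=n-m(\alpha)$ for every $\alpha\in\UPF_n$. Hence, inside $\DUPF_n$, the conditions ``exactly $m$ blocks'' and ``$D(\alpha)=n-m$'' cut out the same subset, and the two counts agree.

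\textbf{Step 3 (the summations).} First, $\DUPF_n$ is the disjoint union of the sets $\DUPF_{n,m}$ over $m$. For $n\ge1$ we have $m\ge1$. Summing $d_m\Stir{n}{m}$ over $1\le m\le n$ therefore gives $\sum_{k=0}^n d_k\Stir{n}{k}$, since $\Stir{n}{0}=0$ for $n\ge1$. This is Proposition~\ref{prop:count}; the case $n=0$ is covered by the stated convention.

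Second, summing Theorem~\ref{thm:strat} over $r$ uses the rencontres identity $\sum_{r=0}^m\binom{m}{r}d_{m-r}=m!$. This holds because every permutation of $[m]$ has some number $r$ of fixed points, and choosing those points and deranging the rest produces it uniquely. The sum is therefore $m!\Stir{n}{m}$, the number of unit-interval parking functions with $m$ blocks. Summing over $m$ then gives $\sum_m m!\Stir{n}{m}=F_n=|\UPF_n|$.

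None of these steps is a real obstacle. The only points needing care are the index ranges: $m\ge1$ for $n\ge1$, and $\binom{m}{r}d_{m-r}$ accounting for every permutation of $[m]$.
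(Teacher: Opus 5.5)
Your proposal is correct and follows the paper's proof: set $r=0$ in Theorem~\ref{thm:strat} and use $D(\alpha)=n-m(\alpha)$ from~\eqref{eq:disp-blocks}. Your explicit checks of the two summation claims fill in details the paper leaves implicit, namely $\Stir{n}{0}=0$ for $n\ge1$ and the rencontres identity $\sum_{r}\binom{m}{r}d_{m-r}=m!$.
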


\begin{proof}
Take $r = 0$ in Theorem~\ref{thm:strat} and use $D(\alpha) = n - m(\alpha)$.
\end{proof}

On the ordered-set-partition side, Theorem~\ref{thm:strat} is an elementary rencontres count; the parking content is the identity $D(\alpha) = n - m(\alpha)$, which turns the block count into a displacement statistic, and the lucky-car reading of fixed blocks in Section~\ref{sec:lucky}.

\subsection{Marking fixed blocks and the Poisson limit law}

The stratification of Theorem~\ref{thm:strat} packages into a rencontres-type generating function.

\begin{proposition}[Marking fixed blocks]\label{prop:fixegf}
Let $R_m(u) := \sum_{r=0}^{m} \binom{m}{r} d_{m-r}\, u^r$ be the rencontres polynomial, which counts the permutations of $[m]$ by number of fixed points. Then
\[
\sum_{\alpha \in \UPF_n} u^{\fix(\alpha)}\, t^{\,D(\alpha)}
= \sum_{m=0}^{n} \Stir{n}{m}\, t^{\,n-m}\, R_m(u),
\qquad
\sum_{n \ge 0}\ \sum_{\alpha \in \UPF_n} u^{\fix(\alpha)}\, \frac{x^n}{n!}
= \frac{e^{(u-1)(e^x-1)}}{2 - e^x}.
\]
Setting $u = 0$ gives $\tilde F(x)$ (Theorem~\ref{thm:egf}); setting $u = 1$ gives $F(x) = 1/(2 - e^x)$.
\end{proposition}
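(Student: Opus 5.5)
The plan is to derive both identities from Theorem~\ref{thm:strat} and the displacement formula~\eqref{eq:disp-blocks}, then obtain the exponential generating function by the same symbolic-method composition used in the proof of Theorem~\ref{thm:egf}, now with fixed points marked.

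\textbf{The finite identity.} Group the elements $\alpha \in \UPF_n$ according to the pair $(m(\alpha), \fix(\alpha)) = (m, r)$. By~\eqref{eq:disp-blocks}, every such $\alpha$ has $D(\alpha) = n - m$, so it contributes $u^r t^{n-m}$. Theorem~\ref{thm:strat} says there are $\Stir{n}{m}\binom{m}{r} d_{m-r}$ of them. Summing over $r$ for fixed $m$ produces $\Stir{n}{m}\, t^{n-m} R_m(u)$, and summing over $m$ gives the first formula. The case $n = 0$ is covered by the conventions: the empty function has $m = 0$ and $\fix = 0$, and the right-hand side equals $\Stir{0}{0} R_0(u) = 1$.

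\textbf{The generating function.} Set $t = 1$ and use the labelled composition $\mathrm{Perm}_u \circ \mathcal{E}_{\ge 1}$, where $\mathrm{Perm}_u$ is the species of permutations with fixed points weighted by $u$. A permutation decomposes as a set of fixed points together with a derangement of the remaining points. Hence its EGF is
\[
\sum_{m\ge0} R_m(u)\frac{z^m}{m!} = e^{uz} D(z) = \frac{e^{(u-1)z}}{1-z}.
\]
Alternatively, this follows directly from the binomial convolution $R_m(u) = \sum_r \binom{m}{r} u^r d_{m-r}$.

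As in the proof of Theorem~\ref{thm:egf}, I will label the blocks $B_i \leftrightarrow i$ by increasing minimum element. With this labelling, a fixed point of $\sigma$ corresponds exactly to a fixed block. Coefficient extraction via $(e^x-1)^m/m! = \sum_n \Stir{n}{m} x^n/n!$ then turns $\sum_m R_m(u)(e^x-1)^m/m!$ into $\sum_n \sum_m \Stir{n}{m} R_m(u)\, x^n/n!$, which by the first identity at $t = 1$ is the claimed series. Substituting $z = e^x - 1$ gives $e^{(u-1)(e^x-1)}/(2-e^x)$.

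\textbf{Specializations.} Since $R_m(0) = d_m$, setting $u = 0$ returns $D(E(x)) = \tilde F(x)$. Since $R_m(1) = m!$, setting $u = 1$ gives $1/(2-e^x) = F(x)$.

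\textbf{Main obstacle.} The argument is essentially bookkeeping, and the one point needing care is the justification that fixed points of the permutation structure correspond exactly to fixed blocks. This rests on the canonical min-element labelling of blocks. I would handle it by appealing to the bijection already set up in the proof of Theorem~\ref{thm:strat}, rather than rederiving it from species theory.
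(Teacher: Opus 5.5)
Your proposal is correct and follows essentially the paper's own argument: the finite identity is Theorem~\ref{thm:strat} summed over $r$ using $D(\alpha)=n-m(\alpha)$, and the EGF comes from $\sum_m R_m(u)z^m/m! = e^{(u-1)z}/(1-z)$ with $z=e^x-1$ substituted. Your coefficient-extraction route through the first identity at $t=1$ already settles the ``main obstacle'' you raise, since the fixed-point/fixed-block correspondence is built into Theorem~\ref{thm:strat}.
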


\begin{proof}
The first identity restates Theorem~\ref{thm:strat}, using $D(\alpha) = n - m(\alpha)$ from~\eqref{eq:disp-blocks}. For the second, $\sum_{m \ge 0} R_m(u)\, z^m/m! = e^{uz}\, D(z) = e^{(u-1)z}/(1-z)$, because a permutation counted by fixed points is a set of fixed points together with a derangement of the remaining elements. Substituting $z = e^x - 1$, the labelled composition with nonempty blocks used in the proof of Theorem~\ref{thm:egf}, gives the stated function.
\end{proof}

\begin{corollary}[Poisson limit law for fixed blocks]\label{cor:poisson}
Let $\alpha$ be chosen uniformly at random from $\UPF_n$. Then $\fix(\alpha)$ converges in distribution, as $n \to \infty$, to a Poisson random variable of mean $1$:
\[
\Pr\bigl[\fix(\alpha) = r\bigr] \longrightarrow \frac{e^{-1}}{r!} \qquad (r = 0, 1, 2, \dots).
\]
The case $r = 0$ is the limit $\tilde F_n / F_n \to e^{-1}$ of Remark~\ref{rmk:asym}.
\end{corollary}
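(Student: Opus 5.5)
Fix $r$. By Theorem~\ref{thm:strat}, the number of $\alpha\in\UPF_n$ with $\fix(\alpha)=r$ is $N_{n,r}:=\sum_{m}\Stir{n}{m}\binom{m}{r}d_{m-r}$. By Proposition~\ref{prop:fixegf}, its exponential generating function is the coefficient of $u^r$ in $e^{(u-1)(e^x-1)}/(2-e^x)$:
\[
\sum_{n\ge0} N_{n,r}\frac{x^n}{n!}=\frac{(e^x-1)^r}{r!}\cdot\frac{e^{1-e^x}}{2-e^x}.
\]
So the plan is to run the same singularity analysis as in Remark~\ref{rmk:asym} on this function for each fixed $r$, and then divide by the asymptotics of $F_n$.

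\textbf{Step 1: locate the dominant singularity.} The factor $(e^x-1)^r e^{1-e^x}/r!$ is entire. Hence the only singularities are the zeros of $2-e^x$, namely $x=\log 2+2\pi i k$. The dominant one is the simple pole $x_0=\log 2$, and all the others have strictly larger modulus. At $x_0$ we have $e^{x_0}-1=1$, so the entire prefactor takes the value $e^{-1}/r!$.

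\textbf{Step 2: transfer to coefficients.} Near $x_0$ we write $2-e^x\sim -2(x-x_0)=2x_0(1-x/x_0)$. Then the function behaves like
\[
\frac{e^{-1}}{r!}\cdot\frac{1}{2x_0(1-x/x_0)}.
\]
By the meromorphic transfer theorem~\cite[Ch.~IV]{flajolet2009}, subtracting the polar part leaves a function analytic in a larger disc. Its coefficients are therefore exponentially smaller, and this gives
\[
\frac{N_{n,r}}{n!}\sim \frac{e^{-1}}{r!}\cdot\frac{1}{2(\log 2)^{n+1}}.
\]

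\textbf{Step 3: divide by $F_n$.} Remark~\ref{rmk:asym} gives $F_n\sim n!/\bigl(2(\log 2)^{n+1}\bigr)$, which is the case $u=1$ with prefactor $1$. Dividing, we get $\Pr[\fix=r]=N_{n,r}/F_n\to e^{-1}/r!$ for each fixed $r$.

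\textbf{Step 4: conclude convergence in distribution.} For a $\mathbb{Z}_{\ge0}$-valued variable, pointwise convergence of the point probabilities to a probability mass function already implies convergence in distribution, by Scheffé's lemma. The Poisson$(1)$ weights sum to $1$, so no separate tightness check is needed.

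The main obstacle is Step 1, specifically justifying that $\log 2$ is the unique dominant pole. The other solutions of $e^x=2$ are $\log 2+2\pi i k$ with $k\neq0$, and these have modulus at least $\sqrt{(\log 2)^2+4\pi^2}>\log 2$. Because the numerator is entire for every $r$, this gap is uniform in $r$, and the argument goes through for each $r$.
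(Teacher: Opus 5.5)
Your proof is correct and uses the same singularity analysis at the simple pole $x_0=\log 2$ as the paper's; the only difference is packaging. The paper keeps $u\in[0,1]$ as a parameter, shows $\frac{1}{F_n}\sum_{\alpha\in\UPF_n}u^{\fix(\alpha)}\to e^{u-1}$, and invokes the continuity theorem for probability generating functions, whereas you extract $[u^r]$ first to get the EGF $\frac{(e^x-1)^r}{r!}\cdot\frac{e^{1-e^x}}{2-e^x}$ and hence the point probabilities directly, so your final step needs only the elementary fact that pointwise convergence of point masses to a probability mass function gives convergence in distribution.
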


\begin{proof}
Fix $u \in [0,1]$ and let $G_u(x) := e^{(u-1)(e^x-1)}/(2 - e^x)$ as in Proposition~\ref{prop:fixegf}. As in Remark~\ref{rmk:asym}, $G_u$ has a unique dominant singularity at $x_0 = \log 2$, a simple pole, near which the entire factor $e^{(u-1)(e^x-1)}$ tends to $e^{u-1}$ and $2 - e^x = 2(x_0 - x)\bigl(1 + O(x_0 - x)\bigr)$. Singularity analysis gives $n!\,[x^n]\, G_u(x) \sim \tfrac{1}{2}\, e^{u-1}\, n!\, (\log 2)^{-(n+1)}$; dividing by the case $u = 1$, whose coefficients are $F_n$, yields
\[
\frac{1}{F_n} \sum_{\alpha \in \UPF_n} u^{\fix(\alpha)} \longrightarrow e^{\,u-1} = \sum_{r \ge 0} \frac{e^{-1}}{r!}\, u^r,
\]
the probability generating function of a Poisson variable of mean $1$. Pointwise convergence of probability generating functions on $[0,1]$ implies convergence in distribution for nonnegative integer random variables~\cite[Ch.~IX]{flajolet2009}.
\end{proof}

\subsection{The fixed-block decomposition}

The Fubini and deranged Bell numbers differ only in whether the block permutation may have fixed points. Making this precise gives a convolution with the Bell numbers.

\begin{theorem}[Fixed-block decomposition]\label{thm:fubrel}
For all $n \ge 0$,
\begin{equation}\label{eq:fubrel}
F_n = \sum_{j=0}^{n} \binom{n}{j} B_j\,\tilde F_{n-j};
\end{equation}
equivalently, $F(x) = B(x)\,\tilde F(x)$.
\end{theorem}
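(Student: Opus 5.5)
The quickest route is at the level of exponential generating functions, with a bijective argument on top. By Theorem~\ref{thm:egf}, $\tilde F(x) = e^{1-e^x}/(2-e^x)$. Since $B(x) = e^{e^x-1}$, the product is
\[
B(x)\,\tilde F(x) = e^{e^x-1}\cdot \frac{e^{1-e^x}}{2-e^x} = \frac{1}{2-e^x} = F(x).
\]
Extracting $n!\,[x^n]$ with the binomial convolution rule for products of EGFs then gives~\eqref{eq:fubrel}. This already settles the identity. The same computation is the $u=1$ specialization of Proposition~\ref{prop:fixegf}: there the factor $e^{u(e^x-1)}$ marks the fixed blocks, and at $u=1$ it becomes exactly $B(x)$.

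Because the paper advertises a \emph{bijective} decomposition, I would also give a combinatorial proof. The plan is to split an ordered set partition $(C_1,\dots,C_m)$ of $[n]$ (equivalently, $\alpha \in \UPF_n$ via $\phi$, Theorem~\ref{thm:phi-bij}) into its fixed blocks and its non-fixed blocks. Let $J$ be the union of the fixed blocks, with $|J| = j$. The fixed blocks form a set partition of $J$, which is counted by $B_j$. The non-fixed blocks, kept in their relative order, should form a deranged ordered set partition of $[n]\setminus J$, counted after standardization by $\tilde F_{n-j}$. Choosing $J$ contributes the factor $\binom{n}{j}$.

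The main obstacle is that this map must be a bijection: whether a block is fixed depends on the ranks of leaders among all blocks, and removing or inserting blocks changes those ranks. My first attempt would be to mirror the permutation-level bijection that decomposes a permutation into fixed points plus a derangement, applied to the block permutation $\sigma$. Writing $\sigma \in S_m$ with fixed-point set $R$, the restriction of $\sigma$ to $[m]\setminus R$ is a derangement after standardization, since $\sigma$ maps this set to itself. The ordered partition of $[n]\setminus J$ obtained by deleting the fixed blocks has min-order equal to the induced order on the remaining $B_i$, so its block permutation is exactly this standardized derangement.

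The converse direction is the delicate step. Given a set partition of $J$ and a deranged ordered partition of the complement, the fixed blocks must be reinserted at their min-order positions within the merged min-order of all blocks. I need to check two things: that the result is well defined, and that the inserted blocks are fixed while the others are not. Inserting at min-rank positions keeps each inserted block at its own position. Each non-fixed block sits at the position it had before, shifted by the number of inserted blocks preceding it in value order, while its min-rank is shifted by the number preceding it in min order. If these shifts could differ, a non-fixed block might accidentally become fixed. For that reason I would define the reinsertion through $\sigma$ directly, extending the derangement by fixed points on the index set $R$ of min-ranks of the blocks of $J$ in the merged partition. Fixed points of a permutation occupy the same positions in both orders, so the shifts agree and derangedness is preserved. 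If the bijection becomes messy, the generating-function proof above remains the rigorous fallback.
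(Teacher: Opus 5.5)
Your proof is correct and follows the paper. The computation $B(x)\tilde F(x)=1/(2-e^x)=F(x)$ is the one-line check the paper itself records, and your split is the paper's bijection: a set partition of the union $J$ of fixed blocks, plus a deranged ordered partition of $[n]\setminus J$, with the fixed blocks reinserted at their min-rank positions.

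One correction to your justification of the reinsertion step. The two shifts do \emph{not} agree in general, and they do not need to. For example, reinserting $P=\{\{2\}\}$ into $Q=\{3\}\mid\{4\}\mid\{1\}$ gives $\{3\}\mid\{2\}\mid\{4\}\mid\{1\}$. Here $\{3\}$ keeps position $1$ (shift $0$), while its min-rank rises from $2$ to $3$ (shift $1$).

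The real reason no block of $Q$ becomes fixed is injectivity. Let $a'_1<a'_2<\cdots$ be the positions outside $R$, and let $\tau$ be the block permutation of $Q$. The block at $Q$-position $t$ lands at position $a'_t$ and has merged min-rank $a'_{\tau(t)}$. It would be fixed only if $a'_t=a'_{\tau(t)}$, i.e.\ $t=\tau(t)$, which is impossible since $\tau$ is a derangement. So the naive reinsertion you worried about is exactly the construction you end up with via $\sigma$, and it is already correct.
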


At the level of generating functions the identity is one line: $B(x)\tilde F(x) = e^{e^x-1} \cdot e^{1-e^x}/(2-e^x) = F(x)$. The point of the proof below is the bijection, which splits every unit-interval parking function into its fixed part and its deranged part.

\begin{proof}
We give a bijection between $\OSP_n$ and triples $(S, P, Q)$, where $S \subseteq [n]$, $P$ is an (unordered) set partition of $S$, and $Q$ is a deranged ordered set partition of $[n] \setminus S$. Summing over $|S| = j$, the triples number $\sum_{j} \binom{n}{j} B_j \tilde F_{n-j}$, while $|\OSP_n| = F_n$; through $\phi$, the statement transfers to $\UPF_n$.

\emph{Forward map.} Given $(C_1, \dots, C_k) \in \OSP_n$ with min-ordered blocks $B_1, \dots, B_k$ and block permutation $\sigma$ (so $C_i = B_{\sigma(i)}$), let $S$ be the union of the fixed blocks, $P$ the partition of $S$ into those fixed blocks, and $Q$ the subsequence of non-fixed blocks. Since $\sigma$ fixes each fixed position and is a bijection, it permutes the non-fixed positions among themselves with no fixed point; the non-fixed blocks keep their relative min-element order inside $[n] \setminus S$, so $Q$ is a deranged ordered set partition of $[n] \setminus S$.

\emph{Inverse map.} Given $(S, P, Q)$, list all blocks of $P$ and $Q$ together by increasing minimum element as $D_1, \dots, D_k$; place each block of $P$ at its own canonical position, and fill the remaining positions, in increasing order, with the blocks of $Q$ in their deranged order. The $P$-blocks are then fixed. For the $Q$-blocks, let $a'_1 < a'_2 < \dots$ enumerate the canonical positions not used by $P$ (these coincide with the internal min-order of $Q$). If $Q$ places at its $t$-th position the block of $Q$-rank $\tau(t)$, with $\tau$ a derangement, then the full list places $D_{a'_{\tau(t)}}$ at position $a'_t$; since $t \mapsto a'_t$ is strictly increasing, $a'_t = a'_{\tau(t)}$ would force $t = \tau(t)$, which is impossible. So no $Q$-block is fixed, the fixed set of the result is exactly $S$, and the two maps are mutually inverse.
\end{proof}

\begin{corollary}\label{cor:invfub}
For all $n \ge 0$,
\[
\tilde F_n = \sum_{j=0}^{n} \binom{n}{j}\,\cBell_j\,F_{n-j},
\qquad
\cBell_j := \sum_{k=0}^{j} (-1)^k \Stir{j}{k},
\]
where $\cBell_j$ are the complementary Bell (Uppuluri--Carpenter) numbers, with generating function $\sum_{j\ge0}\cBell_j x^j/j! = e^{\,1-e^x}$.
\end{corollary}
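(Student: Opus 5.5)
The plan is to invert the convolution identity of Theorem~\ref{thm:fubrel} at the level of exponential generating functions. We have $F(x) = B(x)\,\tilde F(x)$, and $B(x) = e^{e^x-1}$ never vanishes, so it is invertible as a formal power series. Hence
\[
\tilde F(x) = B(x)^{-1} F(x) = e^{1-e^x}\cdot\frac{1}{2-e^x}.
\]
This also follows directly from Theorem~\ref{thm:egf}, which already factors $\tilde F$ in exactly this way. The whole proof therefore reduces to identifying the coefficients of the factor $e^{1-e^x}$.

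For that, I will expand $e^{1-e^x} = e^{-(e^x-1)} = \sum_{k\ge0} (-1)^k (e^x-1)^k/k!$. Then I will use the identity $(e^x-1)^k/k! = \sum_{j} \Stir{j}{k} x^j/j!$, exactly as in the coefficient extraction at the end of the proof of Theorem~\ref{thm:egf}. Collecting the coefficient of $x^j/j!$ gives $\sum_{k=0}^{j} (-1)^k \Stir{j}{k} = \cBell_j$. This proves the stated generating function of the complementary Bell numbers. The interchange of summation is legitimate formally, because $(e^x-1)^k$ has order $k$, so only finitely many $k$ contribute to each coefficient.

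Finally, the product rule for exponential generating functions converts $\tilde F(x) = \bigl(\sum_j \cBell_j x^j/j!\bigr)\bigl(\sum_i F_i x^i/i!\bigr)$ into the binomial convolution $\tilde F_n = \sum_j \binom{n}{j}\cBell_j F_{n-j}$.

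No step is a genuine obstacle. The only point needing care is the formal justification of the substitution and of the interchange of sums, and the order argument above handles both. If a combinatorial proof were wanted instead, I would try a sign-reversing involution on pairs (signed set partition, ordered set partition) inverting the bijection of Theorem~\ref{thm:fubrel}. I would not pursue that here, since the corollary only asks for the identity.
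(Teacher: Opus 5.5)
Your proposal is correct and follows essentially the paper's argument: the paper writes $\tilde F(x) = F(x)/B(x) = e^{1-e^x}F(x)$ using Theorems~\ref{thm:egf} and~\ref{thm:fubrel} and equates coefficients of $x^n/n!$. Your derivation of $\sum_j \cBell_j x^j/j! = e^{1-e^x}$ from $(e^x-1)^k/k! = \sum_j \Stir{j}{k}\,x^j/j!$ (with the order-$k$ argument for the interchange) is sound, and it fills in a step the paper only asserts.
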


\begin{proof}
By Theorems~\ref{thm:egf} and~\ref{thm:fubrel}, $\tilde F(x) = F(x)/B(x) = e^{1-e^x}\,F(x)$; equate coefficients. Equivalently, this is the binomial inversion of~\eqref{eq:fubrel}, since $\sum_{j=0}^{n}\binom{n}{j}\cBell_j B_{n-j} = [\,n=0\,]$.
\end{proof}

\begin{remark}
Read through $\phi$, Theorem~\ref{thm:fubrel} says: every $\alpha \in \UPF_n$ is obtained by choosing the set $S$ of cars lying in its fixed blocks, an arbitrary set partition of $S$ into those fixed blocks ($B_{|S|}$ choices), and a deranged unit-interval parking function on the remaining cars. Convolutions of $\tilde F_n$ with $F_n$ and with itself appear in~\cite[Cor.~4.3, Cor.~4.5]{belbachir2021}, and the complementary Bell numbers here are consistent with the fixed-block analysis of the partial deranged Bell numbers in~\cite{djemmada2025}; we do not claim that the identity~\eqref{eq:fubrel} itself is essentially new, only the bijection and its parking reading.
\end{remark}

\subsection{Marking block sizes and singleton blocks}

The composition in the proof of Theorem~\ref{thm:egf} accepts a full set of block-size markers.

\begin{theorem}[Marking block sizes]\label{thm:multi}
Let $y_1, y_2, \dots$ be commuting variables and give each $\alpha \in \DUPF_n$ the weight $y(\alpha) := \prod_{j=1}^{m(\alpha)} y_{\ell_j}$, where $\ell_1, \dots, \ell_m$ are its block lengths. Then, as formal power series,
\[
\sum_{n \ge 0} \Bigl( \sum_{\alpha \in \DUPF_n} y(\alpha) \Bigr) \frac{x^n}{n!}
= D\Bigl( \sum_{j \ge 1} y_j\, \frac{x^j}{j!} \Bigr),
\qquad D(z) = \frac{e^{-z}}{1-z}.
\]
\end{theorem}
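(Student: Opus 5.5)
The plan is to transport the weight through $\phi$ and then rerun the symbolic-method argument from Theorem~\ref{thm:egf}, this time marking block sizes. By Proposition~\ref{prop:count}, $\phi$ restricts to a bijection $\DUPF_n \to \DOSP_n$. By Lemma~\ref{lem:psi}(ii), $|I_j| = \ell_j$ for every block, so the weight $y(\alpha)$ equals the weight $\prod_j y_{|C_j|}$ of the ordered set partition $\phi(\alpha) = (C_1,\dots,C_m)$. It therefore suffices to show that the generating function of deranged ordered set partitions, weighted by $\prod_j y_{|C_j|}$, equals $D\bigl(\sum_{j\ge1} y_j x^j/j!\bigr)$.

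For this I would argue directly at the level of coefficients rather than appeal to species with weights, since that is easy to make rigorous. Group deranged ordered set partitions by their number of blocks $k$ and their underlying unordered partition $P$, listed as $B_1,\dots,B_k$ in min-element order. As in the proof of Theorem~\ref{thm:strat}, the deranged orderings of $P$ correspond bijectively to derangements $\sigma \in S_k$. Every such ordering has the same multiset of block sizes, so each carries the weight $\prod_{i} y_{|B_i|}$. Hence the weighted count on $[n]$ is $\sum_k d_k \sum_{P} \prod_{B\in P} y_{|B|}$, where the inner sum runs over set partitions of $[n]$ into $k$ blocks.

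Next I would use the standard exponential formula for weighted set partitions:
\[
\sum_{n\ge0}\frac{x^n}{n!}\sum_{P \vdash [n],\,|P|=k}\ \prod_{B\in P} y_{|B|} = \frac{1}{k!}\Bigl(\sum_{j\ge1} y_j\frac{x^j}{j!}\Bigr)^{k}.
\]
This follows by expanding the $k$-th power and reading off the multinomial coefficient $\binom{n}{j_1,\dots,j_k}$ as the number of ordered $k$-tuples of disjoint labelled nonempty blocks with those sizes, then dividing by $k!$ to forget the order (the blocks are distinct). This is the weighted analogue of the identity $(e^x-1)^k/k! = \sum_n \Stir{n}{k}x^n/n!$ used in Theorem~\ref{thm:egf}. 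Multiplying by $d_k$ and summing over $k$ gives $\sum_k d_k z^k/k! = D(z)$ evaluated at $z = \sum_j y_j x^j/j!$.

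The only point needing care is that the composition is well defined as a formal power series. Since $z$ has no constant term in $x$, each coefficient $[x^n]$ receives contributions only from $k \le n$, so the substitution is legitimate in $\mathbb{Q}[y_1,y_2,\dots][[x]]$. As a sanity check, setting all $y_j = 1$ recovers Theorem~\ref{thm:egf}. I expect no real obstacle; the main thing to state cleanly is that the weight depends only on the unordered partition, which is what lets the derangement count $d_k$ factor out.
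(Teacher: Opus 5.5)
Your proposal is correct and follows essentially the same route as the paper, which reruns the symbolic-method proof of Theorem~\ref{thm:egf} with each block of size $j$ marked by $y_j$. That is, the paper composes $D$ with the marked nonempty-set series $\sum_{j\ge1} y_j x^j/j!$, and your coefficient-level argument is an explicit verification of that same composition step: you factor out $d_k$ because the weight depends only on the underlying partition, and then use the weighted identity $\frac{1}{k!}\bigl(\sum_{j\ge1} y_j x^j/j!\bigr)^k$.
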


\begin{proof}
Repeat the proof of Theorem~\ref{thm:egf}, marking each block of size $j$ with $y_j$: the generating function of a marked nonempty set is $\sum_{j\ge1} y_j x^j/j!$, and the derangement structure on the set of blocks is unchanged.
\end{proof}

\begin{corollary}[Singleton blocks]\label{cor:singletons}
For $s \ge 0$, let $\DUPF_n^{(s)}$ be the set of $\alpha \in \DUPF_n$ with exactly $s$ blocks of size $1$. Then
\[
\sum_{n \ge 0} |\DUPF_n^{(s)}|\, \frac{x^n}{n!}
= \frac{x^s}{s!} \sum_{m \ge s} d_m\, \frac{(e^x - 1 - x)^{\,m-s}}{(m-s)!},
\qquad
|\DUPF_n^{(s)}| = \sum_{m=s}^{n} d_m \binom{n}{s} \StirTwo{n-s}{m-s},
\]
where $\StirTwo{N}{K}$, the $2$-associated Stirling number of the second kind, counts the partitions of $[N]$ into $K$ blocks of size at least $2$, with $\sum_{N\ge0} \StirTwo{N}{K}\, x^N/N! = (e^x - 1 - x)^K/K!$.
\end{corollary}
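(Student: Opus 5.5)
The plan is to specialize Theorem~\ref{thm:multi}: set $y_1 = v$ and $y_j = 1$ for every $j \ge 2$, so that each $\alpha \in \DUPF_n$ receives weight $v^{s(\alpha)}$, where $s(\alpha)$ is its number of singleton blocks. The inner series becomes $vx + (e^x - 1 - x)$, and the theorem gives
\[
\sum_{n\ge0}\sum_{\alpha\in\DUPF_n} v^{s(\alpha)}\frac{x^n}{n!} = \sum_{m\ge0} \frac{d_m}{m!}\bigl(vx + (e^x-1-x)\bigr)^m .
\]
Since the map $\phi$ preserves block sizes, the block lengths $\ell_j$ of $\alpha$ are exactly the sizes of the blocks of $\phi(\alpha)$, so this weight is the intended one.

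The next step is to extract $[v^s]$. Expanding binomially,
\[
\frac{1}{m!}\bigl(vx + (e^x-1-x)\bigr)^m = \sum_{s=0}^{m} v^s\,\frac{x^s}{s!}\,\frac{(e^x-1-x)^{m-s}}{(m-s)!},
\]
because $\binom{m}{s}/m! = 1/(s!\,(m-s)!)$. The coefficient of $v^s$ is therefore $\frac{x^s}{s!}\sum_{m\ge s} d_m (e^x-1-x)^{m-s}/(m-s)!$, which is the claimed EGF. This is legitimate as a formal power series, since $e^x - 1 - x = O(x^2)$ makes each $x$-coefficient a finite sum.

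For the closed form, we use the quoted EGF $\sum_N \StirTwo{N}{K} x^N/N! = (e^x-1-x)^K/K!$ with $K = m-s$. The product of $x^s/s!$ with this series has $n$-th EGF coefficient $\binom{n}{s}\StirTwo{n-s}{m-s}$, by the binomial convolution of exponential generating functions. Summing against $d_m$ gives the stated formula; terms with $m > n$ vanish because $\StirTwo{n-s}{m-s} = 0$ once $m - s > n - s$.

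A direct combinatorial check is also available. Choose the $s$ singleton cars ($\binom{n}{s}$ ways), partition the remaining cars into $m-s$ blocks of size at least $2$, and derange all $m$ blocks ($d_m$ ways). This agrees with the formula and doubles as a sanity check.

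No step is a real obstacle. The only point requiring care is that the substitution $y_1 = v$, $y_j = 1$ is compatible with Theorem~\ref{thm:multi}, which holds because that theorem is an identity of formal series in commuting variables.
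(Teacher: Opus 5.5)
Your proposal is correct and follows the paper's proof essentially step for step. You specialize Theorem~\ref{thm:multi} at $y_1 = v$ and $y_j = 1$ for $j \ge 2$, expand $(vx + e^x - 1 - x)^m/m!$ binomially, extract the coefficient of $v^s$, and convolve $x^s/s!$ with the exponential generating function of $\StirTwo{N}{m-s}$ to obtain $\binom{n}{s}\StirTwo{n-s}{m-s}$; the direct combinatorial count is the same check the paper gives.
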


\begin{proof}
Set $y_1 = u$ and $y_j = 1$ for $j \ge 2$ in Theorem~\ref{thm:multi}; then $\sum_j y_j x^j/j! = ux + (e^x - 1 - x)$ and
\[
\sum_{n, s \ge 0} |\DUPF_n^{(s)}|\, u^s\, \frac{x^n}{n!}
= D\bigl(ux + e^x - 1 - x\bigr)
= \sum_{m \ge 0} \frac{d_m}{m!}\, \bigl(ux + e^x - 1 - x\bigr)^m.
\]
Expanding by the binomial theorem and extracting the coefficient of $u^s$ gives the stated series; comparing coefficients of $x^n/n!$ turns the factor $x^s/s!$ into $\binom{n}{s}$ and gives the counting formula. Directly: choose the $s$ elements that form singleton blocks, partition the remaining $n - s$ elements into $m - s$ blocks of size at least $2$, and choose a deranged ordering of all $m$ blocks.
\end{proof}

\begin{remark}\label{rmk:s0}
Summing over $s \ge 0$ (that is, setting $u = 1$) recovers Proposition~\ref{prop:count}. The case $s = 0$ counts the deranged unit-interval parking functions in which \emph{no block has size $1$}. Note that this does not mean that every car is displaced: by Section~\ref{sec:lucky}, every block, whatever its size, contains exactly one lucky car, namely its leader. The correct reading of $s = 0$ is that every lucky car shares its block with at least one displaced car.
\end{remark}

\section{Extensions and equivalent models}\label{sec:extensions}

\subsection{An \texorpdfstring{$r$}{r}-start deranged extension}\label{sec:r}

Bradt et al.~\cite{bradt2024} study the unit-interval parking functions of length $n+r$ whose first $r$ entries are pairwise distinct, written $\UPF^r_{n+r}$, and show that $|\UPF^r_{n+r}| = \sum_{k\ge0}(k+r)!\,\rStir{n+r}{k+r}{r}$, the $r$-Fubini number. Here $\rStir{n+r}{k+r}{r}$ is an $r$-Stirling number of the second kind~\cite{broder1984}, counting the partitions of $[n+r]$ into $k+r$ blocks with $1, \dots, r$ in distinct blocks. The next lemma is the exact form of the restriction of $\phi$ that we need.

\begin{lemma}\label{lem:rstart}
Let $\alpha \in \UPF_{n+r}$. The entries $a_1, \dots, a_r$ are pairwise distinct if and only if the cars $1, \dots, r$ lie in pairwise distinct blocks of $\phi(\alpha)$.
\end{lemma}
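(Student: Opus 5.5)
The plan is to use the explicit description of $\alpha$ in terms of $\phi(\alpha)$ given by Lemma~\ref{lem:psi}(iii). Write $\phi(\alpha) = (I_1, \dots, I_m)$ with block starts $\nu_j$. Since $\alpha = \psi(\phi(\alpha))$, the $t$-th smallest car of $I_j$ has preference $\nu_j$ if $t = 1$ and $\nu_j + t - 2$ if $t \ge 2$. The value ranges $[\nu_j, \nu_{j+1}-1]$ are pairwise disjoint, so cars in different blocks have different preferences. Within one block, the only repeated preference is $\nu_j$, and it is carried exactly by the two smallest cars of $I_j$ when $\ell_j \ge 2$.

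The direction ``distinct blocks $\Rightarrow$ distinct entries'' is then immediate. If cars $1, \dots, r$ lie in pairwise distinct blocks, their preferences lie in pairwise disjoint value ranges, so they are distinct.

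For the converse, suppose two cars $i < i'$ in $[r]$ lie in the same block $I_j$; I will produce two cars in $[r]$ with equal preferences. The key observation is that $I_j \cap [r]$ has at least two elements, namely $i$ and $i'$. Because $[r]$ is an initial segment, the two smallest elements of $I_j$ are both at most $i' \le r$. Call them $c_{j,1} < c_{j,2}$; both lie in $[r]$. By the $\psi$ rule, both have preference $\nu_j$, so $a_{c_{j,1}} = a_{c_{j,2}}$, contradicting distinctness.

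The only delicate point is this initial-segment argument. A general pair of cars in one block need not share a preference; only the two smallest do. That is why the lemma holds for the prefix $1, \dots, r$ rather than for arbitrary sets of cars. With that observation in place, the rest is a direct reading of Lemma~\ref{lem:psi}.
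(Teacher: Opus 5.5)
Your proof is correct, but it takes a different route from the paper's.

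The paper argues dynamically, through the parking process and the lucky-car/leader correspondence. If $a_1,\dots,a_r$ are distinct, an induction shows that each of cars $1,\dots,r$ finds its preferred spot empty. So all of them are lucky, hence leaders, and since each block has only one leader they lie in distinct blocks. Conversely, if they lie in distinct blocks, the initial-segment property makes each car $i\le r$ the minimum, hence the leader, of its block. So $a_i$ is that block's start, and distinct blocks have distinct starts.

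You instead read everything statically off $\alpha=\psi(\phi(\alpha))$ from Lemma~\ref{lem:psi}(iii). This buys two things:
\begin{itemize}
\item Your direction ``distinct blocks $\Rightarrow$ distinct entries'' uses only the disjointness of the value ranges $[\nu_j,\nu_{j+1}-1]$. It needs neither leaders nor the prefix hypothesis, and in fact holds for any set of cars.
\item Your contrapositive for the other direction isolates exactly where the prefix $[r]$ matters: a downward-closed set that meets a block twice must contain its two smallest cars $c_{j,1}<c_{j,2}$, and these share the preference $\nu_j$.
\end{itemize}

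The paper's version, in exchange, yields the extra information that under either condition cars $1,\dots,r$ are precisely lucky leaders of their blocks. That keeps the lemma tied to the lucky-car viewpoint of Section~\ref{sec:lucky}.
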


\begin{proof}
Suppose $a_1, \dots, a_r$ are pairwise distinct. By induction on $i \le r$: when car $i$ arrives, the occupied spots are exactly $a_1, \dots, a_{i-1}$, and $a_i$ differs from all of them, so car $i$ parks in $a_i$. Hence cars $1, \dots, r$ are all lucky; each lucky car is the leader of its block (Section~\ref{sec:lucky}), and each block has only one leader, so cars $1, \dots, r$ lie in pairwise distinct blocks.

Conversely, suppose cars $1, \dots, r$ lie in pairwise distinct blocks, and fix $i \le r$. Every car of smaller index lies in $\{1, \dots, i-1\} \subseteq \{1, \dots, r\}$, hence in a different block; so car $i$ has the smallest index in its block, i.e.\ it is the leader, hence lucky, and $a_i$ is the start of its block (Lemma~\ref{lem:psi}). Distinct blocks have distinct starts, so $a_1, \dots, a_r$ are pairwise distinct.
\end{proof}

\begin{definition}\label{def:rdupf}
For $r \ge 1$ and $n \ge 0$, the set of \emph{$r$-start deranged unit-interval parking functions} is
\[
\DUPF^r_{n+r} := \{\, \alpha \in \UPF^r_{n+r} : \phi(\alpha) \in \DOSP_{n+r} \,\} = \UPF^r_{n+r} \cap \DUPF_{n+r}.
\]
\end{definition}

\begin{theorem}\label{thm:rcount}
For all $r \ge 1$ and $n \ge 0$,
\[
|\DUPF^r_{n+r}| = \sum_{k\ge0} d_{k+r}\,\rStir{n+r}{k+r}{r}.
\]
\end{theorem}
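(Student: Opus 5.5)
Transport through $\phi$ as in Proposition~\ref{prop:count}, and then count deranged ordered set partitions with a prescribed separation condition. By Lemma~\ref{lem:rstart} and Definition~\ref{def:rdupf}, $\phi$ restricts to a bijection from $\DUPF^r_{n+r}$ onto the set of deranged ordered set partitions of $[n+r]$ in which $1,\dots,r$ lie in pairwise distinct blocks. Being deranged is a property of $\phi(\alpha)$ by Definition~\ref{def:dupf}, and being in $\UPF^r_{n+r}$ is equivalent to the separation condition by the lemma. So it suffices to count these ordered set partitions.

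For the count I would mirror the proof of Theorem~\ref{thm:strat}. First choose the underlying unordered partition $P$ of $[n+r]$ with $1,\dots,r$ in distinct blocks. It necessarily has at least $r$ blocks; write the number of blocks as $k+r$ with $k\ge 0$. There are $\rStir{n+r}{k+r}{r}$ such $P$, by the definition of the $r$-Stirling numbers recalled above. List the blocks of $P$ by increasing minimum as $B_1,\dots,B_{k+r}$. The orderings of $P$ are exactly $(B_{\sigma(1)},\dots,B_{\sigma(k+r)})$ with $\sigma\in S_{k+r}$, each obtained from a unique $\sigma$. By Definition~\ref{def:deranged}, such an ordering is deranged iff $\sigma$ is a derangement, which gives $d_{k+r}$ choices. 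The separation condition concerns only the unordered partition, so it does not interact with the choice of $\sigma$. Summing over $k$ then yields $\sum_{k\ge0} d_{k+r}\rStir{n+r}{k+r}{r}$. Terms with $k>n$ vanish, so the sum is finite.

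There is essentially no obstacle. The only point needing care is the independence of the two conditions: the $r$-start restriction must be a condition on the unordered partition, while the deranged condition is a condition on the ordering permutation relative to the min-order. Lemma~\ref{lem:rstart} supplies exactly the first of these, since it phrases distinctness of $a_1,\dots,a_r$ purely in terms of which blocks contain the cars $1,\dots,r$.

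As a sanity check I would compare the count with the un-deranged formula of Bradt et al. Replacing $d_{k+r}$ by $(k+r)!$ in the argument should recover $|\UPF^r_{n+r}|=\sum_k (k+r)!\rStir{n+r}{k+r}{r}$. I would also check that $r=1$ reduces to Proposition~\ref{prop:count}, since $\rStir{n+1}{k+1}{1}=\Stir{n+1}{k+1}$ and the condition is vacuous.
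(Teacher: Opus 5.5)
Your proposal is correct and follows the paper's proof essentially verbatim. You restrict $\phi$ via Lemma~\ref{lem:rstart} to the ordered set partitions of $[n+r]$ with $1,\dots,r$ in distinct blocks, choose the underlying partition in $\rStir{n+r}{k+r}{r}$ ways, and choose a deranged ordering in $d_{k+r}$ ways as in Theorem~\ref{thm:strat}; your $r=1$ sanity check matches the paper's closing remark.
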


\begin{proof}
By Lemma~\ref{lem:rstart} and Theorem~\ref{thm:phi-bij}, $\phi$ restricts to a bijection from $\UPF^r_{n+r}$ onto the ordered set partitions of $[n+r]$ in which $1, \dots, r$ lie in distinct blocks, and hence from $\DUPF^r_{n+r}$ onto the deranged such partitions. To build one with $k+r$ blocks, choose the underlying partition in $\rStir{n+r}{k+r}{r}$ ways, then a deranged ordering of all $k+r$ blocks in $d_{k+r}$ ways, as in the proof of Theorem~\ref{thm:strat}. Summing over $k$ gives the formula. For $r = 1$ the condition on the first entries is empty, and the formula reduces to $\tilde F_{n+1}$, consistent with $\UPF^1_{n+1} = \UPF_{n+1}$.
\end{proof}

Using $\sum_{n\ge0}\rStir{n+r}{k+r}{r}\frac{x^n}{n!} = \frac{e^{rx}(e^x-1)^k}{k!}$~\cite{broder1984,bradt2024} and $D^{(r)}(z) = \sum_{k\ge0} d_{k+r}\frac{z^k}{k!}$ (differentiate $D(z) = \sum_m d_m z^m/m!$ term by term), Theorem~\ref{thm:rcount} packages into the exponential generating function
\begin{equation}\label{eq:regf}
G_r(x) := \sum_{n\ge0} |\DUPF^r_{n+r}|\,\frac{x^n}{n!}
= e^{rx}\,D^{(r)}\!\bigl(e^x - 1\bigr),
\qquad D(z) = \frac{e^{-z}}{1-z}.
\end{equation}
The first values are
\[
|\DUPF^2_{n+2}| = 1,\ 4,\ 23,\ 171,\ 1522, \ldots,
\qquad
|\DUPF^3_{n+3}| = 2,\ 15,\ 125,\ 1180,\ 12629, \ldots
\]
for $n = 0, 1, 2, \ldots$ (confirmed by exhaustive search through length $7$).

\begin{proposition}[Asymptotics]\label{prop:rasym}
Fix $r \ge 1$. As $n \to \infty$,
\[
|\DUPF^r_{n+r}| \sim \frac{(n+r)!}{2e\,(\log 2)^{\,n+r+1}} \sim \tilde F_{n+r}.
\]
In particular $|\DUPF^r_N| / \tilde F_N \to 1$: for large $N$, almost every deranged unit-interval parking function of length $N$ has pairwise distinct first $r$ entries.
\end{proposition}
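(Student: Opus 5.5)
Take $G_r(x)=e^{rx}D^{(r)}(e^x-1)$ from \eqref{eq:regf} and apply singularity analysis, as in Remark~\ref{rmk:asym} and the proof of Corollary~\ref{cor:poisson}. The $r$-th derivative of $D(z)=e^{-z}/(1-z)$ is, by Leibniz,
\[
D^{(r)}(z)=\sum_{i=0}^{r}\binom{r}{i}(-1)^{r-i}e^{-z}\,\frac{i!}{(1-z)^{i+1}},
\]
a combination of poles at $z=1$ of order at most $r+1$. Its leading term is $r!\,e^{-z}(1-z)^{-(r+1)}$. Substituting $z=e^x-1$ shows that $G_r$ is meromorphic with poles only where $e^x=2$. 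As in Remark~\ref{rmk:asym}, the dominant one is $x_0=\log 2$, and every other root $\log 2+2\pi i k$ has strictly larger modulus. So $G_r$ has a unique dominant pole, of order $r+1$.

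Near $x_0$ we have $1-z=2-e^x=2(x_0-x)(1+O(x_0-x))$ and $e^{-z}\to e^{-1}$, $e^{rx}\to 2^r$. Hence
\[
G_r(x)\sim \frac{2^r\,r!\,e^{-1}}{2^{r+1}(x_0-x)^{r+1}}=\frac{r!}{2e\,(x_0-x)^{r+1}}.
\]
Transferring via $[x^n](x_0-x)^{-(r+1)}\sim \binom{n+r}{r}x_0^{-(n+r+1)}$ gives
\[
\frac{|\DUPF^r_{n+r}|}{n!}\sim \frac{r!}{2e}\binom{n+r}{r}(\log 2)^{-(n+r+1)}.
\]
Multiplying by $n!$ and using $n!\,r!\binom{n+r}{r}=(n+r)!$ yields the first asymptotic. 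The second asymptotic is Remark~\ref{rmk:asym} with $n$ replaced by $n+r$. The ratio statement follows, and since $\DUPF^r_N\subseteq\DUPF_N$ it reads as ``almost all''.

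The step needing care is justifying that the lower-order poles and the remaining singularities are negligible.

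\begin{itemize}
\item The terms with $i<r$ in $D^{(r)}$ have pole order at most $r$ at $x_0$, so they contribute $O\bigl(n^{r-1}(\log 2)^{-n}\bigr)$, while the main term is of order $n^r(\log 2)^{-n}$.
\item The subleading corrections in the local expansion of the main term are handled by the standard meromorphic transfer: subtract the full principal part at $x_0$. What remains is analytic in a disc of radius $|\log 2+2\pi i|>\log 2$, so its coefficients are exponentially smaller.
\end{itemize}

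For the orientation check, $r=1$ gives $D'(z)$ with a double pole and recovers $\tilde F_{n+1}$, consistent with Theorem~\ref{thm:rcount}.
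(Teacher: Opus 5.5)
Your proposal is correct and follows essentially the same route as the paper: locate the unique dominant pole of $G_r$ at $x_0=\log 2$, of order $r+1$ with leading coefficient $r!/(2e)$, transfer coefficients, and use $n!\,r!\binom{n+r}{r}=(n+r)!$. The only differences are cosmetic. You obtain the local behaviour of $D^{(r)}$ at $z=1$ from the Leibniz expansion rather than from $e^{-z}=e^{-1}\sum_k (1-z)^k/k!$, and you spell out why the lower-order pole terms are negligible, which the paper leaves inside ``singularity analysis''.
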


\begin{proof}
From $e^{-z} = e^{-1} \sum_{k\ge0} (1-z)^k/k!$ we get $D(z) = e^{-1}(1-z)^{-1} + (\text{a function analytic at } z = 1)$, hence
\[
D^{(r)}(z) = e^{-1}\, r!\, (1-z)^{-(r+1)}\bigl(1 + O(1-z)\bigr) \qquad (z \to 1).
\]
The function $G_r$ of~\eqref{eq:regf} is analytic where $e^x \neq 2$; its dominant singularity is $x_0 = \log 2$ (all other solutions of $e^x = 2$ have larger modulus), a pole of order $r+1$. Near $x_0$ we have $e^{rx} \to 2^r$ and $1 - (e^x - 1) = 2 - e^x = 2(x_0 - x)\bigl(1 + O(x_0 - x)\bigr)$, so
\[
G_r(x) \sim 2^r \cdot e^{-1} r!\,\bigl(2(x_0 - x)\bigr)^{-(r+1)} = \frac{r!}{2e}\,(x_0 - x)^{-(r+1)}.
\]
Singularity analysis~\cite[Ch.~IV--VI]{flajolet2009} then gives
\[
\frac{|\DUPF^r_{n+r}|}{n!} = [x^n]\,G_r(x) \sim \frac{r!}{2e}\binom{n+r}{r}\frac{1}{(\log 2)^{\,n+r+1}},
\]
and $n!\,r!\binom{n+r}{r} = (n+r)!$ gives the first claim. The second follows from $\tilde F_N \sim N!/\bigl(2e(\log 2)^{N+1}\bigr)$ (Remark~\ref{rmk:asym}).
\end{proof}

The convergence is slow; for instance $|\DUPF^2_6| = 1522$ against the limiting approximation $\approx 1723$.

\begin{remark}\label{rmk:not-belbachir}
The numbers of Theorem~\ref{thm:rcount} should not be confused with the \emph{$r$-deranged Bell numbers} $\tilde F_{n,r}$ of Belbachir et al.~\cite[\S6]{belbachir2021}, which equal $\sum_{k} d_{k,r}\rStir{n+r}{k+r}{r}$ with $d_{k,r}$ an $r$-derangement number (fixed-point-free permutations of $[k+r]$ in which $1, \dots, r$ occupy distinct cycles). Our condition deranges the \emph{full} block permutation with no constraint on cycle type, so $d_{k+r}$ replaces $d_{k,r}$, and the two sequences differ already at small parameters: our $|\DUPF^2_{n+2}|$ begins $1, 4, 23, 171$, whereas $\tilde F_{n,2}$ begins $2, 30, 362, 4390$. Thus $\DUPF^r_{n+r}$ is the ``fully deranged'' partner of the $r$-Fubini family of~\cite{bradt2024}: a sibling of, but distinct from, the $r$-deranged Bell numbers.
\end{remark}

\subsection{Connection to deranged Cayley permutations}\label{sec:cayley}

The bijection $\phi$ realizes $\DUPF_n$ inside ordered set partitions. We transport it one step further, to the sequence model of Mor and Fraenkel~\cite{morfraenkel1984}, where the deranged condition reads off the order of \emph{first appearances}.

\begin{definition}\label{def:cayley}
A \emph{Cayley permutation} of length $n$ is a sequence $p = (p_1, \dots, p_n) \in \mathbb{Z}_{>0}^{\,n}$ whose set of values is an initial segment $\{1, 2, \dots, k\}$, each value occurring at least once. We write $\Cay_n$ for the set of Cayley permutations of length $n$.
\end{definition}

Mor and Fraenkel~\cite{morfraenkel1984} showed $|\Cay_n| = F_n$. The underlying bijection with ordered set partitions is standard: a Cayley permutation $p$ with values $\{1, \dots, k\}$ corresponds to the ordered set partition $\rho(p) := (C_1, \dots, C_k)$, ordered by value, where $C_v = \{\, i \in [n] : p_i = v \,\}$; conversely, $(C_1, \dots, C_k)$ gives the Cayley permutation with $p_i = v \iff i \in C_v$. Thus $\rho \colon \Cay_n \to \OSP_n$ is a bijection.

For $p \in \Cay_n$ with values $\{1, \dots, k\}$, let $m_v := \min\{\, i : p_i = v \,\}$ be the position of the \emph{first occurrence} of $v$. Reading $p$ from left to right, the values make their first appearances in some order, recorded by the \emph{first-appearance permutation} $\theta_p := \std(m_1, \dots, m_k) \in S_k$: here $\theta_p(v) = j$ means that $v$ is the $j$-th distinct value to appear.

\begin{definition}\label{def:dcp}
A \emph{deranged Cayley permutation} of length $n$ is a Cayley permutation $p$ whose first-appearance permutation $\theta_p$ is a derangement: no value $v$ is the $v$-th distinct value to make its first appearance. We write $\DCP_n$ for this set.
\end{definition}

\begin{remark}\label{rmk:naive-cayley}
One might instead impose the simpler condition that the first occurrence of each value $v$ avoid \emph{position} $v$, that is, $m_v \neq v$ for all $v$. This is strictly weaker and does not produce the deranged Bell numbers: it admits $31$ sequences of length $4$ against $\tilde F_4 = 28$ (both counts verified by exhaustive search). For example, $p = (2,1,1,3)$ has first occurrences $(m_1, m_2, m_3) = (2,1,4)$, all with $m_v \neq v$; yet value $3$ is the third distinct value to appear, so $\theta_p(3) = 3$ is a fixed point and $p \notin \DCP_4$. The correct condition compares the \emph{rank} of the first occurrence, not its position, with $v$---exactly as the deranged condition on ordered set partitions compares a block against the min-element order rather than against absolute position.
\end{remark}

\begin{proposition}[Deranged Cayley permutations]\label{prop:cayley}
Let $\Theta := \rho^{-1} \circ \phi \colon \UPF_n \to \Cay_n$. Then $\Theta$ restricts to a bijection $\DUPF_n \to \DCP_n$. In particular, $|\DCP_n| = \tilde F_n$.
\end{proposition}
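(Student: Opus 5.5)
The plan is to reduce everything to a statement about $\rho$. By Theorem~\ref{thm:phi-bij} and Proposition~\ref{prop:count}, $\phi$ is a bijection $\UPF_n \to \OSP_n$ restricting to a bijection $\DUPF_n \to \DOSP_n$. Since $\rho$ is a bijection, so is $\rho^{-1}$, and hence $\Theta = \rho^{-1}\circ\phi$ is a bijection $\UPF_n \to \Cay_n$. It therefore suffices to show that $\rho^{-1}(\DOSP_n) = \DCP_n$, i.e.\ that for every $p \in \Cay_n$ we have $\rho(p) \in \DOSP_n$ if and only if $\theta_p$ is a derangement. The count $|\DCP_n| = \tilde F_n$ then follows from Proposition~\ref{prop:count}.

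The key step is to identify the block permutation of $\rho(p)$ with $\theta_p$. Write $\rho(p) = (C_1,\dots,C_k)$, so that $C_v = \{i : p_i = v\}$ and $\min C_v = m_v$. Let $B_1,\dots,B_k$ be the blocks listed by increasing minimum. Then $C_v = B_j$ exactly when $m_v$ is the $j$-th smallest among $m_1,\dots,m_k$, which by definition means $\theta_p(v) = j$. So the permutation $\sigma$ with $C_v = B_{\sigma(v)}$ of Definition~\ref{def:deranged} is precisely $\theta_p$. Consequently $C_v$ is a fixed block if and only if $\theta_p(v) = v$, and $\rho(p)$ is deranged if and only if $\theta_p$ has no fixed points. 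This is the same rank argument as in Proposition~\ref{prop:intrinsic}.

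The only point needing care is the indexing convention, which I expect to be the (mild) main obstacle. Definition~\ref{def:deranged} writes $C_i = B_{\sigma(i)}$, while $\theta_p$ is defined by standardizing the first occurrences. Both read ``the $v$-th block, in value order, has rank $\sigma(v)$ in min-order.'' Even if one convention produced $\sigma^{-1}$ instead, the result would be unaffected, since a permutation and its inverse have the same fixed points. I would also note, as a sanity check, that the composite $\Theta$ sends $\alpha$ to the sequence $p_i = \beta(i)$ of block indices. Under this map $m_v = \mu_v$, so the statement also matches the leader-word characterization of Proposition~\ref{prop:std}, with $\theta_p = \std(W(\alpha))$.
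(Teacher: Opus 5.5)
Your proof is correct and is essentially the paper's argument. The paper notes that the first occurrence $m_v$ equals the leader $\mu_v=\min I_v$, so $\theta_p=\std(W(\alpha))$, and then invokes Proposition~\ref{prop:std}; you instead do the same rank comparison directly on $\rho(p)$, showing its block permutation is exactly $\theta_p$, and combine this with $\phi$ restricting to a bijection $\DUPF_n\to\DOSP_n$. The difference is only bookkeeping: your version isolates $\rho(\DCP_n)=\DOSP_n$ without reference to parking functions, and your closing sanity check is precisely the paper's proof.
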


\begin{proof}
Both $\phi$ and $\rho$ are bijections, so $\Theta$ is one. Take $\alpha \in \UPF_n$ with $\phi(\alpha) = (I_1, \dots, I_m)$ and set $p = \Theta(\alpha)$, so that $p_i = v$ if and only if $i \in I_v$. The first occurrence of $v$ in $p$ is
\[
m_v = \min\{\, i \in [n] : p_i = v \,\} = \min I_v = \mu_v,
\]
the leader of the $v$-th value-block. Hence $(m_1, \dots, m_m) = W(\alpha)$ and $\theta_p = \std(W(\alpha))$. By Proposition~\ref{prop:std}, $\alpha \in \DUPF_n$ if and only if $\std(W(\alpha))$ is a derangement, which by Definition~\ref{def:dcp} is exactly $p \in \DCP_n$. The count follows from Proposition~\ref{prop:count}.
\end{proof}

The following diagram summarizes the three models and their deranged versions; all maps are the bijections defined above, and each restricts to the deranged objects.
\[
\begin{tikzcd}[row sep=large, column sep=large]
\UPF_n \arrow[r, leftrightarrow, "\phi"] \arrow[dr, leftrightarrow, "\Theta"'] & \OSP_n \arrow[d, leftrightarrow, "\rho"] \\
& \Cay_n
\end{tikzcd}
\qquad
\begin{tikzcd}[row sep=large, column sep=large]
\DUPF_n \arrow[r, leftrightarrow, "\phi"] \arrow[dr, leftrightarrow, "\Theta"'] & \DOSP_n \arrow[d, leftrightarrow, "\rho"] \\
& \DCP_n
\end{tikzcd}
\]
The three models $\DUPF_n$, $\DOSP_n$, and $\DCP_n$ are the same object seen through cars, through blocks, and through first appearances, each counted by $\tilde F_n$.

\section{Concluding remarks}\label{sec:conc}

We close with one direction in which the transported structure might support results that are genuinely parking-theoretic, beyond the present note.

\begin{question}[$q$-analogues]\label{q:q}
Displacement is not a rich statistic here, since $D(\alpha) = n - m(\alpha)$ only sees the number of blocks. Natural finer statistics are the inversion number of the Cayley permutation $\Theta(\alpha)$, or the preference sum $a_1 + \dots + a_n$. Is there a closed form, say through $q$-Stirling numbers and a compatible $q$-analogue of the derangement numbers, for
\[
\sum_{\alpha \in \DUPF_n} q^{\operatorname{inv}(\Theta(\alpha))}
\qquad\text{or}\qquad
\sum_{\alpha \in \DUPF_n} q^{\,a_1 + \dots + a_n}\,?
\]
\end{question}

\end{document}